\providecommand{\tabularnewline}{\\}
\theoremstyle{plain}
\newtheorem{thm}{\protect\theoremname}
  \theoremstyle{plain}
  \newtheorem{prop}{\protect\propositionname}
  \theoremstyle{definition}
  \newtheorem{defn}{\protect\definitionname}
  \theoremstyle{remark}
  \newtheorem*{rem*}{\protect\remarkname}
\newtheorem{assumption}{Assumption}
\newcommand{\ones}{\mathbf 1}
\newcommand{\reals}{{\mbox{\bf R}}}
\newcommand{\integers}{{\mbox{\bf Z}}}
\newcommand{\card}{\mathop{\bf card}}
\newcommand{\conv}{\mathop{\bf conv}}
\newcommand{\argmin}{\mathop{\rm argmin}}
\newcommand{\epi}{\mathop{\bf epi}} 
\newcommand{\env}{\mathop{\bf env}} 
\date{}
  \providecommand{\definitionname}{Definition}
  \providecommand{\propositionname}{Proposition}
  \providecommand{\remarkname}{Remark}
\providecommand{\theoremname}{Theorem}
\begin{document}

\title{A Two-Step Linear Programming Model for Energy-Efficient Timetables
in Metro Railway Networks}

\author{Shuvomoy Das Gupta\thanks{\protect\url{shuvomoy.dasgupta@mail.utoronto.ca}, Department of Electrical
\& Computer Engineering, University of Toronto, 10 King's College
Road, Toronto, Ontario, Canada}\and J. Kevin Tobin\thanks{\protect\url{Kevin.Tobin@thalesgroup.com},Thales Canada Inc., 105
Moatfield Drive, Toronto, Ontario, Canada}\and Lacra Pavel\thanks{\protect\url{pavel@control.toronto.edu}, Department of Electrical
\& Computer Engineering, University of Toronto, 10 King's College
Road, Toronto, Ontario, Canada}\and}
\maketitle
\begin{abstract}
In this paper we propose a novel two-step linear optimization model
to calculate energy-efficient timetables in metro railway networks.
The resultant timetable minimizes the total energy consumed by all
trains and maximizes the utilization of regenerative energy produced by braking trains, subject to the constraints in the railway network.
In contrast to other existing models, which are $\mathcal{NP}$-hard,
our model is computationally the most tractable one being a linear
program.  We apply our optimization model to different instances of
service PES2-SFM2 of line 8 of Shanghai Metro network spanning a full
service period of one day (18 hours) with thousands of active trains.
For every instance, our model finds an optimal timetable very quickly
(largest runtime being less than 13s) with significant reduction in
effective energy consumption (the worst case being 19.27\%). Code
based on the model has been integrated with \texttt{Thales Timetable
Compiler} - the industrial timetable compiler of Thales Inc that has
the largest installed base of communication-based train control systems
worldwide.
\end{abstract}

\paragraph*{Keywords }

Railway networks, energy efficiency, regenerative braking, train scheduling,
linear programming. 

\section{Introduction}

\subsection{Background and motivation}

Efficient energy management of electric vehicles using mathematical
optimization has gained a lot of attention in recent years \citep{Patil2012,Nuesch2012,Mura2013,Bashash2011,saber2010intelligent}.
When a train makes a trip from an origin platform to a destination
platform, its optimal speed profile consists of four phases: 1) maximum
acceleration, 2) speed hold, 3) coast and 4) maximum brake \citealp{Howlett1995},
as shown in Figure~\ref{Fig:SpeedProfileOfTrain} in a qualitative
manner. 
\begin{figure*}[htp]
\includegraphics[scale=0.5]{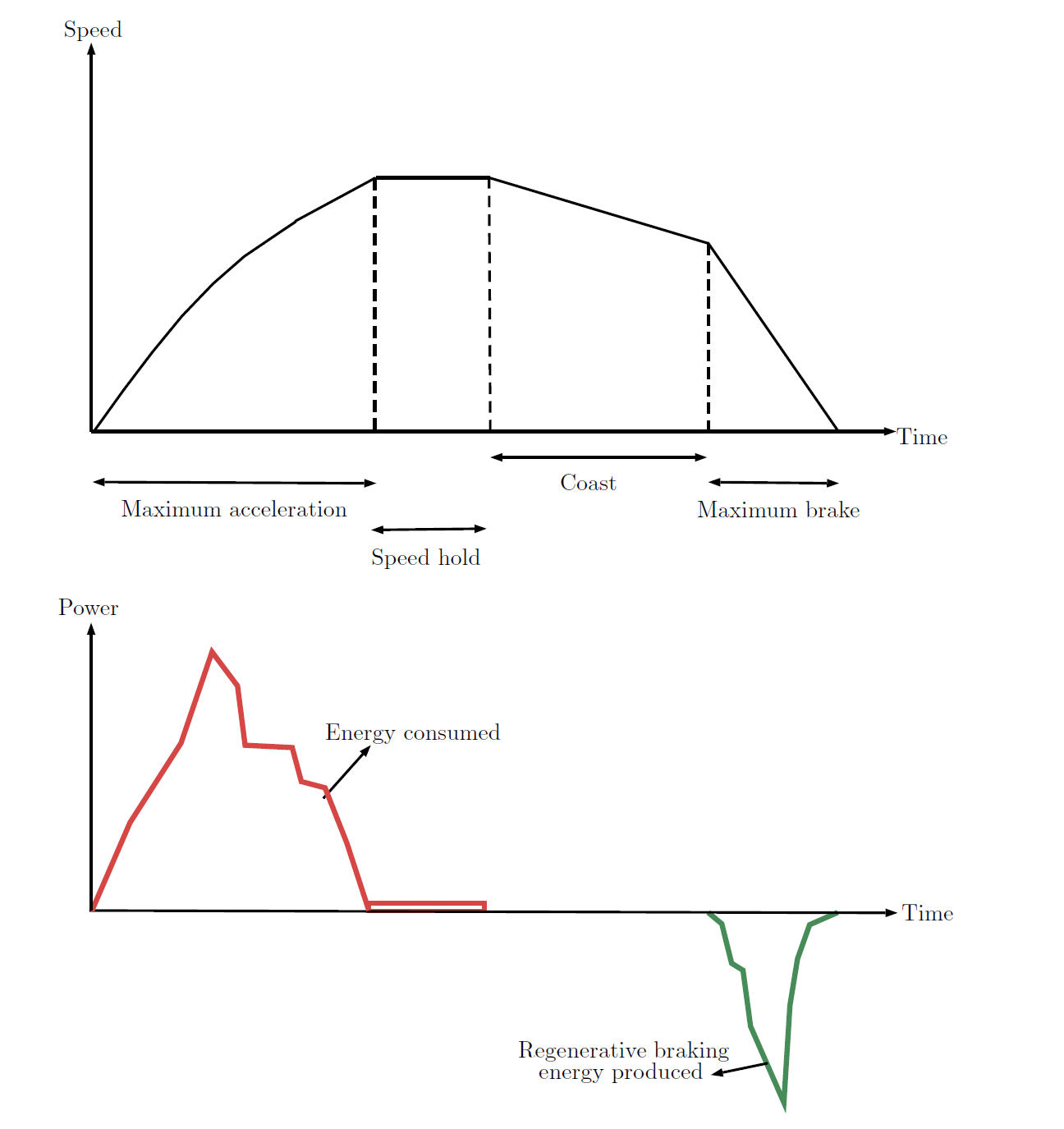} \caption{Optimal speed profile of a train}
\label{Fig:SpeedProfileOfTrain} 
\end{figure*}
Most of the energy required by the train is consumed during the accelerating
phase. During the speed holding phase the energy consumption is negligible
compared to accelerating phase, and during the coasting phase there
is no need for energy. When the train brakes, it produces energy by
using a regenerative braking mechanism. This energy is called regenerative
braking energy. Calculating energy-efficient timetables for trains
in railway networks is a relevant problem in this regard. Electricity
is the main source of energy for trains in most modern railway networks;
in such networks, a train is equipped with a regenerative braking
mechanism that allows it to produce electrical energy during its braking
phase. In this paper, we formulate a two-step linear optimization
model to obtain an energy-efficient timetable for a metro railway
network. The timetable schedules the arrival time and the departure
time of each train to and from the platforms it visits such that the
total electrical energy consumed is minimized and the utilization
of produced regenerative energy is maximized.

\subsection{Related work}

The general timetabling problem in a metro railway network has been
studied extensively over the past three decades \citep{Harrod2012}.
However, very few results exist that can calculate energy-efficient
timetables. Now we discuss the related research. We classify the related
work as follows. The first two papers are mixed integer programming
model, the next three are models based on meta-heuristics and the
last one is an analytical study.

A Mixed Integer Programming (MIP) model, applicable only to single
train-lines, is proposed by Pe�a-Alcaraz et al.\ \citep{Pena-Alcaraz2012}
to maximize the total duration of all possible synchronization processes
between all possible train pairs. The model is then applied successfully
to line three of the Madrid underground system. However, the model
can have some drawbacks. First, considering all train pairs in the
objective will result in a computationally intractable problem even
for a moderate sized railway network. Second, for a train pair in
which the associated trains are far apart from each other, most, if
not all, of the regenerative energy will be lost due to the transmission
loss of the overhead contact line. Finally, the model assumes that
the durations of braking and accelerating phases stay the same with
varying trip times, which is not the case in reality.

The work in \citep{DasGuptaACC15} proposes a more tractable MIP model,
applicable to any railway network, by considering only train pairs
suitable for regenerative energy transfer. The optimization model
is applied numerically to the Dockland Light Railway and shows a significant
increase in the total duration of the synchronization process. Although
such increase, in principle, may increase the total savings in regenerative
energy, the actual energy saving is not directly addressed. Similar
to \citep{Pena-Alcaraz2012}, this model too, assumes that even if
the trip time changes, the duration of the associated braking and
accelerating stay the same.

Other relevant works implement meta-heuristics. The work in \citep{Li2014}
implements genetic algorithm to calculate timetables that maximize
the utilization of regenerative energy while minimizing the tractive
energy of the trains. Numerical studies for the model is implemented
to Beijing Metro Yizhuang Line of China showing notable increase in
energy efficiency. The work in \citep{Yang2013} presents a cooperative
integer programming model to utilize the use of regenerative energy
of trains and proposes genetic algorithm to solve it. Similar to \citep{Li2014},
this numerical studies have been performed to Beijing Metro Yizhuang
Line of China, though the improvement is stated in the increase in
overlapping time only. The work in \citep{Le2014} presents a nonlinear
integer programming model which is solved using simulated annealing.
The numerical experiments have been conducted for the island line
of the mass transit system in Hong Kong.

An insightful analytical study of a periodic railway schedule appears
in \citep{aLi2014a}. The model uses the KKT conditions to calculate
and analyze the properties of an energy-efficient timetable. The resultant
analytical model is then applied to Beijing Metro Yizhuang Line of
China numerically, which shows that the model can reduce the net energy
consumption considerably.

\begin{figure}[htp]
\includegraphics{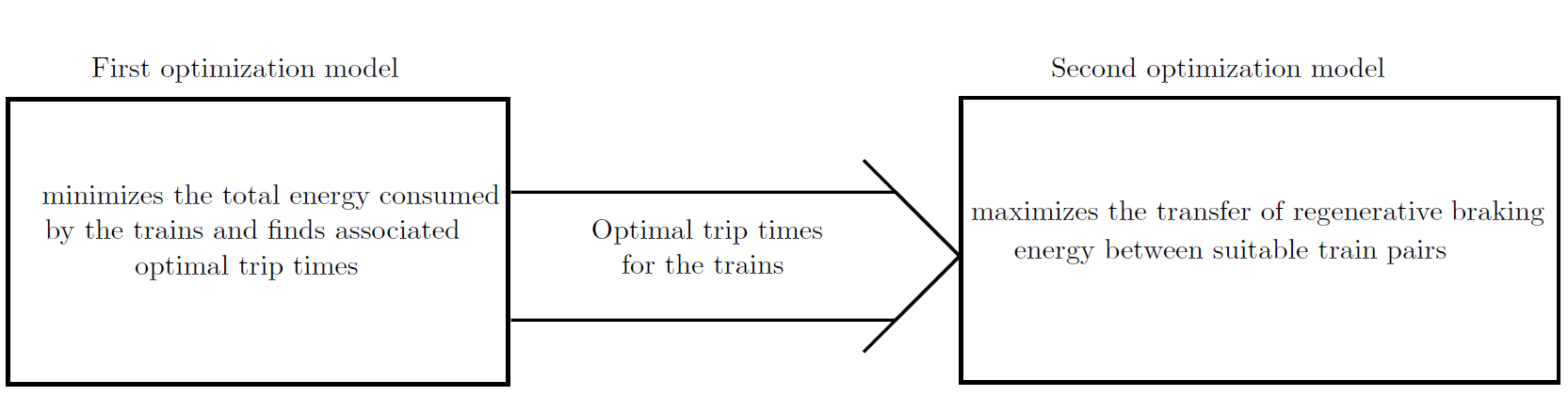}

\caption{Flow-chart of the two steps of the optimization model}
\label{fig:flow-chart} 
\end{figure}

\subsection{Contributions}

We propose a novel two-step linear optimization model to calculate
an energy-efficient railway timetable. The \emph{first} optimization
model minimizes the total energy consumed by all trains subject to
the constraints present in the railway network. The problem can be
formulated as a linear program, with the optimal value attained by
an integral vector. The \emph{second} optimization model uses the
optimal trip time from the first optimization model and maximizes
the transfer of regenerative braking energy between suitable train
pairs. Both the steps of our optimization model are linear programs,
whereas the optimization models in related works are $\mathcal{NP}$-hard.
A flow-chart of the two steps of the optimization model is shown in
Figure~\ref{fig:flow-chart}. Our model can calculate energy-efficient
railway timetables for large scale networks in a short CPU time. Code
based on the model has been embedded with the railway timetable compiler
(\texttt{Thales Timetable Compiler}) of Thales Inc, which has the
largest installed base of communication-based train control systems
worldwide. \texttt{Thales Timetable Compiler} is used by many railway
management systems worldwide including: Docklands Light Railway in
London, UK., the West Rail Line and Ma On Shan Line in Hong Kong,
the Red Line and Green Line in Dubai, the Kelana Jaya Line in Kuala
Lumpur.

This paper is organized as follows. In Section~\ref{notationAndNotions}
we describe the notation used, and then in Section~\ref{modellingConstraints}
we model and justify the constraints present in the railway network.
The first optimization model is presented in Section~\ref{Preli_Opt}.
Section~\ref{Final_Opt} formulates the second optimization problem
that additionally maximizes the utilization of regenerative braking
energy. Section \ref{sec:lim} describes the limitations of the optimization
model. In Section~\ref{numStud} we apply our model to different
instances of an existing metro railway network spanning a full working
day and describe the results. Section~\ref{Conclusion} presents
the conclusion.

\section{Notation and notions}

\label{notationAndNotions} Every set described in this paper is strictly
ordered and finite unless otherwise specified. The set-cardinality
(number of elements of the set) and the $i$th element of such a set
$C$ is denoted by $|C|$ and $C(i)$ respectively. The set of real
numbers and integers are expressed by $\reals$ and $\integers$ respectively;
subscripts $+$ and $++$ attached with either set denote non-negativity
and positivity of the elements respectively. A column vector with
all components one is denoted by $\ones$. The symbol $\preceq$ stands
for componentwise inequality between two vectors and the symbol $\wedge$
stands for conjunction. The number of nonzero components of a vector
$x$ is called cardinality of that vector and is denoted by $\card(x)$.
Note that, cardinality of a vector is different from set-cardinality.
The $i$th unit vector $e_{i}$ is the vector with all components
zero except for the $i$th component which is one. The epigraph of
a function $f:C\rightarrow\reals$ (where C is any set) denoted by
$\epi f$ is the set of input-output pairs that $f$ can achieve along
with anything above, \emph{i.e.,} 
\[
\epi f=\{(x,t)\in C\times\reals\mid x\in C,t\geq f(x)\}.
\]
The convex hull of any set $C$, denoted by $\conv C$, is the set
containing all convex combinations of points in $C$. Consequently,
if $C$ is nonconvex, then its best convex outer approximation is
$\conv C$, as it is the smallest set containing $C$.

The set of all platforms in a railway network is indicated by $\mathcal{N}$.
A directed arc between two distinct and non-opposite platforms is
called a track. The set of all tracks is represented by $\mathcal{A}$.
The directed graph of the railway network is expressed by $(\mathcal{N},\mathcal{A})$.
The set of all trains is denoted by $\mathcal{T}$, where $|\mathcal{T}|$
is fixed. The sets of all platforms and all tracks visited by a train
$t$ in chronological order are denoted by $\mathcal{N}^{t}\subseteq\mathcal{N}$
and $\mathcal{A}^{t}\subseteq\mathcal{A}$ respectively. The decision
variables are the train arrival and departure times, to and from the
associated platforms, respectively. Let $a_{i}^{t}$ and $d_{i}^{t}$
be the arrival time and the departure time of the train $t\in\mathcal{T}$
to and from the platform $i\in\mathcal{N}^{t}$.

Table \ref{my-label} contains the list of symbols used in this paper.

\begin{longtable}{ll}
\caption{List of symbols}
\tabularnewline
$|C|$  & The number of elements of a finite countable set $C$ \tabularnewline
$C(i)$  & The $i$th element of a finite countable set $C$ \tabularnewline
$\reals$  & The set of real numbers \tabularnewline
$e_{i}$  & The $i$th unit vector \tabularnewline
$\epi f$  & The epigraph of a function $f$ \tabularnewline
$\card(x)$  & The number of nonzero elements in a vector $x\in\reals^{n}$ \tabularnewline
$\mathcal{N}$  & The set of all platforms in a railway network \tabularnewline
$\mathcal{A}$  & The set of all tracks \tabularnewline
$\mathcal{T}$  & The set of all trains \tabularnewline
$\mathcal{N}^{t}$  & The set of all platforms visited by a train $t$ in chronological
order \tabularnewline
$\mathcal{A}^{t}$  & The set of all tracks visited by a train $t$ in chronological order \tabularnewline
$a_{i}^{t}$  & The arrival time of train $t$ at platform $i$ \tabularnewline
$d_{i}^{t}$  & The departure time of train $t$ from platform $i$ \tabularnewline
$[\underline{\tau}_{ij}^{t},\overline{\tau}_{ij}^{t}]$  & The trip time window for train $t$ from platform $i$ to platform
$j$ \tabularnewline
$\mathcal{B}_{ij}$  & The set of all train pairs involved in turn-around events on crossing-over
$(i,j)$ \tabularnewline
$[\underline{\kappa}_{ij}^{tt'},\overline{\kappa}_{ij}^{tt'}]$  & The trip time window for train $t$ on the crossing-over $(i,j)$ \tabularnewline
$[\underline{\delta}_{i}^{t},\overline{\delta}_{i}^{t}]$  & The dwell time window for train $t$ at platform $i$ \tabularnewline
$\chi$  & The set of all platform pairs situated at the same interchange stations \tabularnewline
$\mathcal{C}_{ij}$  & The set of connecting train pairs for a platform pair $(i,j)\in\chi$ \tabularnewline
$[\underline{\chi}_{ij}^{tt'},\overline{\chi}_{ij}^{tt'}]$  & The connection window between train $t$ at platform $i$ and and
train $t'$ at platform $j$ \tabularnewline
$[0,m]$  & The time bound for any event time in the timetable \tabularnewline
$\mathcal{H}_{ij}$  & The set of train-pairs who move along that track $(i,j)$ \tabularnewline
$[\underline{h}_{i}^{tt'},\overline{h}_{i}^{tt'}]$  & The headway time window between train $t$ and $t'$ at or from platform
$i$ \tabularnewline
$[\underline{\tau}_{\mathcal{P}}^{t},\overline{\tau}_{\mathcal{P}}^{t}]$  & The total travel time window for train $t$ to traverse its train
path \tabularnewline
$\bar{\mathcal{N}}$  & The set of all nodes in the constraint graph \tabularnewline
$\bar{\mathcal{A}}$  & The set of arcs in the constraint graph \tabularnewline
$\bar{\mathcal{A}}_{\textrm{trip}}$  & The set of all arcs associated with trip time constraints \tabularnewline
$x_{i}$  & The arrival or departure time of some train from a platform in the
constraint graph \tabularnewline
$[l_{ij},u_{ij}]$  & The time window associated with arc $(i,j)$ of the constraint graph \tabularnewline
$f_{ij}$  & Energy consumption associated with the trip $(i,j)\in\bar{\mathcal{A}}_{\textrm{trip}}$ \tabularnewline
$c_{ij}(x_{i}-x_{j})+b_{ij}$  & Affine approximation for $f_{ij}$ \tabularnewline
$\left((\bar{a}_{i}^{t},\bar{d}_{i}^{t})_{i\in\mathcal{N}}\right)_{t\in\mathcal{T}}$  & Solution to step one optimization model \tabularnewline
$\Omega$  & The set containing all opposite platform pairs powered by the same
electrical substations \tabularnewline
$\mathcal{T}_{i}$  & The set of all trains which arrive at, dwell and then depart from
platform $i$ \tabularnewline
$\overset{\rightharpoonup}{t}$  & Temporally closest train to the right of train $t$ \tabularnewline
$\overset{\leftharpoonup}{t}$  & Temporally closest train to the left of train $t$ \tabularnewline
$\tilde{t}$  & Temporally closest train to train $t$ \tabularnewline
$\triangledown_{i}^{t}$  & The relative distance of $a_{i}^{t}$ from the regenerative alignment
point \tabularnewline
$\vartriangle_{j}^{t}$  & The relative distance of the consumptive alignment point from $d_{i}^{t}$ \tabularnewline
$\mathcal{E}$  & The set of all synchronization processes between suitable train pairs \tabularnewline
$\overset{\rightharpoonup}{\mathcal{E}}$  & A subset of $\mathcal{E}$ containing elements of the form $(i,j,t,\overset{\rightharpoonup}{t})$ \tabularnewline
$\overset{\leftharpoonup}{\mathcal{E}}$  & A subset of $\mathcal{E}$ containing elements of the form $(i,j,t,\overset{\leftharpoonup}{t})$ \tabularnewline
$\env f$  & Convex envelope of function $f$ \tabularnewline
\label{my-label}  & \tabularnewline
\end{longtable}

\section{Modelling the constraint set}

\label{modellingConstraints} In this section we describe the constraint
set for our optimization model. This comprises the feasibility constraints
for a railway network of arbitrary topology, and the domain of the
decision variables. In every active railway network, the railway management
has an operating feasible timetable; we use the sequence of the trains
from that timetable. The lower and upper bound of the constraints
are integers representing time in seconds.

\subsection{Trip time constraint}

The trip time constraints play the most important role in train energy
consumption and regenerative energy production. These can be of two
types as follows.

\subsubsection{Trip time constraint associated with a track}

Consider the trip of any train $t\in\mathcal{T}$ from platform $i$
to platform $j$ along the track $(i,j)\in\mathcal{A}^{t}$. The train
$t$ departs from platform $i$ at time $d_{i}^{t}$, arrives at platform
$j$ at time $a_{j}^{t}$, and it can have a trip time between $\underline{\tau}_{ij}^{t}$
and $\overline{\tau}_{ij}^{t}$. The trip time constraint can be written
as follows: 
\begin{align}
\forall t\in\mathcal{T},\quad\forall(i,j)\in\mathcal{A}^{t},\quad\underline{\tau}_{ij}^{t}\leq a_{j}^{t}-d_{i}^{t}\leq\overline{\tau}_{ij}^{t}.\label{eq:TripTimeConstraint}
\end{align}

\subsubsection{Trip time constraint associated with a crossing-over}

A crossing-over is a special type of directed arc that connects two\emph{
train-lines}, where a train-line is a directed path with the set of
nodes representing non-opposite platforms and the set of arcs representing
non-opposite tracks. If after arriving at the terminal platform of
a train-line, a train turns around by traversing the crossing-over
and starts travelling through another train-line, then the same physical
train is treated and labelled functionally as two different trains
by the railway management \citep[page~41]{Peeters2003}. Let $\varphi$
be the set of all crossing-overs, where turn-around events occur.
Consider any crossing-over $(i,j)\in\varphi$, where the platforms
$i$ and $j$ are situated on different train-lines. Let $\mathcal{B}_{ij}$
be the set of all train pairs involved in corresponding turn-around
events on the crossing-over $(i,j)$. Let $(t,t')\in\mathcal{B}_{ij}$.
Train $t\in\mathcal{T}$ turns around at platform $i$ by travelling
through the crossing-over $(i,j)$, and beginning from platform $j$
starts traversing a different train-line as train $t'\in\mathcal{T}\setminus\{t\}$.
A time window $[\underline{\kappa}_{ij}^{tt'},\overline{\kappa}_{ij}^{tt'}]$
has to be maintained between the departure of the train from platform
$i$ (labelled as train $t$) and arrival at platform $j$ (labelled
as train $t'$). We can write this constraint as follows: 
\begin{align}
\forall(i,j)\in\varphi,\quad\forall(t,t')\in\mathcal{B}_{ij},\quad\underline{\kappa}_{ij}^{tt'}\leq a_{j}^{t'}-d_{i}^{t}\leq\overline{\kappa}_{ij}^{tt'}.\label{eq:turnAroundConstraint}
\end{align}
To clearly illustrate the constraint we consider Figure \ref{crossing-over}.
Here we have two train lines: line 1 and line 2. The terminal platform
on line 1 is platform $i$ and the first platform on line 2 is platform
$j$. The crossing-over from line 1 to line 2 is the arc $(i,j)$.
The train shown in the figure is labelled as $t$ on platform $i$
and labelled as train $t'$ on platform $j$.

\begin{figure}[htp]
\includegraphics{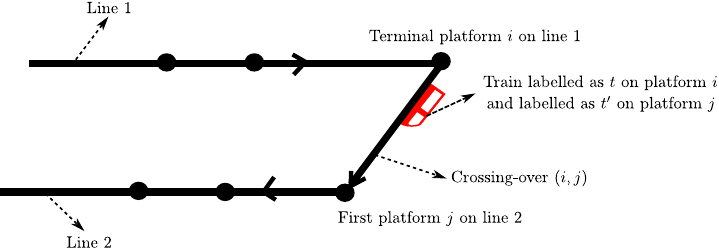} \caption{Trip time constraint associated with a crossing-over}
\label{crossing-over} 
\end{figure}

\subsection{Dwell time constraint}

When any train $t\in\mathcal{T}$ arrives at a platform $i\in\mathcal{N}^{t}$,
it dwells there for a certain time interval denoted by $[\underline{\delta}_{i}^{t},\overline{\delta}_{i}^{t}]$
so that the passengers can get off and get on the train prior to its
departure from platform $j$. The dwell time constraint can be written
as follows: 
\begin{align}
\forall t\in\mathcal{T},\quad\forall i\in\mathcal{N}^{t},\quad\underline{\delta}_{i}^{t}\leq d_{i}^{t}-a_{i}^{t}\leq\overline{\delta}_{i}^{t}.\label{eq:DwellTimeConstraint}
\end{align}

Every train $t\in\mathcal{T}$ arrives at the first platform $\mathcal{N}^{t}(1)$
in its train-path either from the depot or by turning around from
some other line, and departs from the final platform $\mathcal{N}^{t}(|\mathcal{N}^{t}|)$
in order to either return to the depot or start as a new train on
another line by turning around. So, the train $t$ dwells at all platforms
in $\mathcal{N}^{t}$. This is the reason why in Equation \eqref{eq:DwellTimeConstraint}
the platform index $i$ is varied over all elements of the set $\mathcal{N}^{t}$.

\subsection{Connection constraint}

In many cases, a single train connection might not exist between the
origin and the desired destination of a passenger. To circumvent this,
connecting trains are often used at interchange stations. Let $\chi\subseteq\mathcal{N}\times\mathcal{N}$
be the set of all platform pairs situated at the same interchange
stations, where passengers transfer between trains. Let $\mathcal{C}_{ij}$
be the set of connecting train pairs for a platform pair $(i,j)\in\chi$.
For a train pair $(t,t')\in\mathcal{C}_{ij},$ train $t$ is arriving
at platform $i$ and train $t'\in\mathcal{T}$ is departing from platform
$j$. A connection time window denoted by $[\underline{\chi}_{ij}^{tt'},\overline{\chi}_{ij}^{tt'}]$
is maintained between arrival of $t$ and subsequent departure of
$t'$, so that passengers can get off from the first train and get
on the latter. Let $(i,j)\in\chi$. Then the connection constraint
can be written as: 
\begin{align}
 & \forall(i,j)\in\chi,\quad\forall(t,t')\in\mathcal{C}_{ij},\quad\underline{\chi}_{ij}^{tt'}\leq d_{j}^{t'}-a_{i}^{t}\leq\overline{\chi}_{ij}^{tt'}.\label{eq:ConnectionConstraint}
\end{align}

\subsection{Headway constraint}

In any railway network, a minimum amount of time between the departures
and arrivals of consecutive trains on the same track is maintained.
This time is called headway time. For maintaining the quality of passenger
service, many urban railway system keeps an upper bound between the
arrivals and departures of successive trains on the same track, so
that passengers do not have to wait too long before the next train
comes. Let $(i,j)\in\mathcal{A}$ be the track between two platforms
$i$ and $j$, and $\mathcal{H}_{ij}$ be the set of train-pairs who
move along that track successively in order of their departures. Consider
$(t,t')\in\mathcal{H}_{ij}$, and let $[\underline{h}_{i}^{tt'},\overline{h}_{i}^{tt'}]$
and $[\underline{h}_{j}^{tt'},\overline{h}_{j}^{tt'}]$ be the time
windows that have to be maintained between the departures and arrivals
of the trains $t$ and $t'$ from and to the platforms $i$ and $j$
respectively. So, the headway constraint can be written as: 
\begin{align}
 & \forall(i,j)\in\mathcal{A},\quad\forall(t,t')\in\mathcal{H}_{ij},\quad\underline{h}_{i}^{tt'}\leq d_{i}^{t'}-d_{i}^{t}\leq\overline{h}_{i}^{tt'}\;\wedge\ \underline{h}_{j}^{tt'}\leq a_{j}^{t'}-a_{j}^{t}\leq\overline{h}_{j}^{tt'}.\label{eq:SafetyConstraint1}
\end{align}

Similarly, headway times have to be maintained between two consecutive
trains going through a crossing over. Consider any crossing over $(i,j)\in\varphi$
and two such trains, which leave the terminal platform of a train-line
$i$ labelled as $t_{1}$ and $t_{2}$, traverse the crossing over
$(i,j)$, and arrive at platform $j$ of some other train-line labelled
as $t_{1}'$ and $t_{2}'$. The set of all such train quartets $((t_{1},t_{1}'),(t_{2},t_{2}'))$
is represented by $\tilde{\mathcal{H}}_{ij}$. Let $[\underline{h}_{i}^{t_{1}t_{2}},\overline{h}_{i}^{t_{1}t_{2}}]$
be the headway time window between the departures of trains $t_{1}$
and $t_{2}$ from platform $i$ and $[\underline{h}_{j}^{t_{1}'t_{2}'},\overline{h}_{j}^{t_{1}'t_{2}'}]$
be the headway time window between the arrivals of the trains $t_{1}'$
and $t_{2}'$ to the platforms $j$. The associated headway constraints
can be written as: 
\begin{align}
\forall(i,j)\in\varphi,\quad\forall((t_{1},t_{1}'),(t_{2},t_{2}'))\in\tilde{\mathcal{H}}_{ij},\quad\underline{h}_{i}^{t_{1}t_{2}}\leq d_{i}^{t_{2}}-d_{i}^{t_{1}}\leq\overline{h}_{i}^{t_{1}t_{2}}\;\wedge\ \underline{h}_{j}^{t_{1}'t_{2}'}\leq a_{j}^{t_{2}'}-a_{j}^{t_{1}'}\leq\overline{h}_{j}^{t_{1}'t_{2}'}.\label{eq:SafetyConstraint2}
\end{align}

Now we discuss the relation between passenger demand, headway and
number of trains. We denote the passenger demand by $D$, train capacity
by $c$ and utilization rate by $u$. If we denote the number of trains
in service per hour by $n$, then we have 
\[
D=c\times u\times n
\]
\citep{Li2014}. Because the headway time $h$ satisfies the relation
$h=\frac{3600}{n}$, we have 
\begin{equation}
h=\frac{3600\times c\times u}{D}.\label{eq:headway_calculation}
\end{equation}
It should be noted that the train capacity $c$ and the utilization
rate $u$ are constant parameters. However the passenger demand varies
with time. As a result, in the equation above trains will have different
headway at different periods.

\subsection{Total travel time constraint}

The train-path of a train is the directed path containing all platforms
and tracks visited by it in chronological order. To maintain the quality
of service in the railway network, for every train $t\in\mathcal{T}$,
the total travel time to traverse its train-path has to stay within
a time window $[\underline{\tau}_{\mathcal{P}}^{t},\overline{\tau}_{\mathcal{P}}^{t}]$.
We can write this constraint as follows: 
\begin{align}
\forall t\in\mathcal{T},\quad\underline{\tau}_{\mathcal{P}}^{t}\leq a_{\mathcal{N}^{t}(|\mathcal{N}^{t}|)}^{t}-d_{\mathcal{N}^{t}(1)}^{t}\leq\overline{\tau}_{\mathcal{P}}^{t},\label{eq:TotalTravelTimeConstraints}
\end{align}
where $\mathcal{N}^{t}(1)$ and $\mathcal{N}^{t}(|\mathcal{N}^{t}|)$
are the first and last platform in the train-path of $t$.

\subsection{Domain of the event times}

Without any loss of generality, we set the time of the first event
of the railway service period, which corresponds to the departure
of the first train of the day from some platform, to start at zero
second. By setting all trip times and dwell times to their maximum
possible values we can obtain an upper bound for the final event of
the railway service period, which is the arrival of the last train
of the day at some platform, denoted by $m\in\integers_{++}$. So
the domain of the decision variables can be expressed by the following
equation: 
\begin{align}
\forall t\in\mathcal{T},\quad\forall i\in\mathcal{N}^{t},\quad0\leq a_{i}^{t}\leq m,0\leq d_{i}^{t}\leq m.\label{eq:domain}
\end{align}

In vector notation the decision variables are denoted by $a=\left((a_{i}^{t})_{i\in\mathcal{N}^{t}}\right)_{t\in\mathcal{T}}$
and $d=\left((d_{i}^{t})_{i\in\mathcal{N}^{t}}\right)_{t\in\mathcal{T}}$.

\section{First optimization model}

\label{Preli_Opt} In this section we formulate the first optimization
model that minimizes the total energy consumed by all trains in the
railway network. The organization of this section is as follows. First,
in order to keep the proofs less cluttered, we introduce an equivalent
constraint graph notation. Then we formulate and justify the first
optimization problem. Finally we show that the nonlinear objective
of the initial optimization model can be approximated as a linear
one by applying least-squares. This results in a linear optimization
problem, which has the interesting property that its optimal solution
is attained by an integral vector. 

\subsection{Constraint graph notation}

Each of the constraints described by Equations \eqref{eq:TripTimeConstraint}-\eqref{eq:TotalTravelTimeConstraints}
is associated with two event times (either arrival or departure time
of trains at stations), where one of them precedes another by a time
difference dictated by the time window of that constraint. This observation
helps us to convert our initial notation into an equivalent constraint
graph notation which we describe as follows. 

\paragraph{Converting the initial notation into an equivalent constraint graph}
\begin{itemize}
\item \emph{Nodes of the constraint graph:} All event times in the original
notation are treated as nodes in the constraint graph, the set of
those nodes is denoted by $\bar{\mathcal{N}}$ and the value associated
with a node $i\in\bar{\mathcal{N}}$ is denoted by $x_{i}$, which
represents the arrival or departure time of some train from a platform.
Consider any two nodes in the constraint graph; if there exists a
constraint between the two in the original notation, then in the constraint
graph we create a directed arc between them, the start node being
the first event and the end node being the later one. The set of arcs
thus created in the constraint graph is denoted by $\bar{\mathcal{A}}$.
Note that there cannot be more than one arc between two nodes in the
constraint graph. 
\item \emph{Arcs of the constraint graph:} With each arc $(i,j)\in\bar{\mathcal{A}}$
we associate a time window $[l_{ij},u_{ij}]$ with their values determined
from the Equations \eqref{eq:TripTimeConstraint}-\eqref{eq:TotalTravelTimeConstraints}.
So, each arc $(i,j)\in\bar{\mathcal{A}}$ corresponds to a constraint
of the form $l_{ij}\leq x_{j}-x_{i}\leq u_{ij}$. The set of all arcs
associated with trip time constraints is expressed by $\bar{\mathcal{A}}_{\textrm{trip}}\subset\bar{\mathcal{A}}$. 
\end{itemize}
\begin{figure}[h]
\includegraphics[scale=0.5]{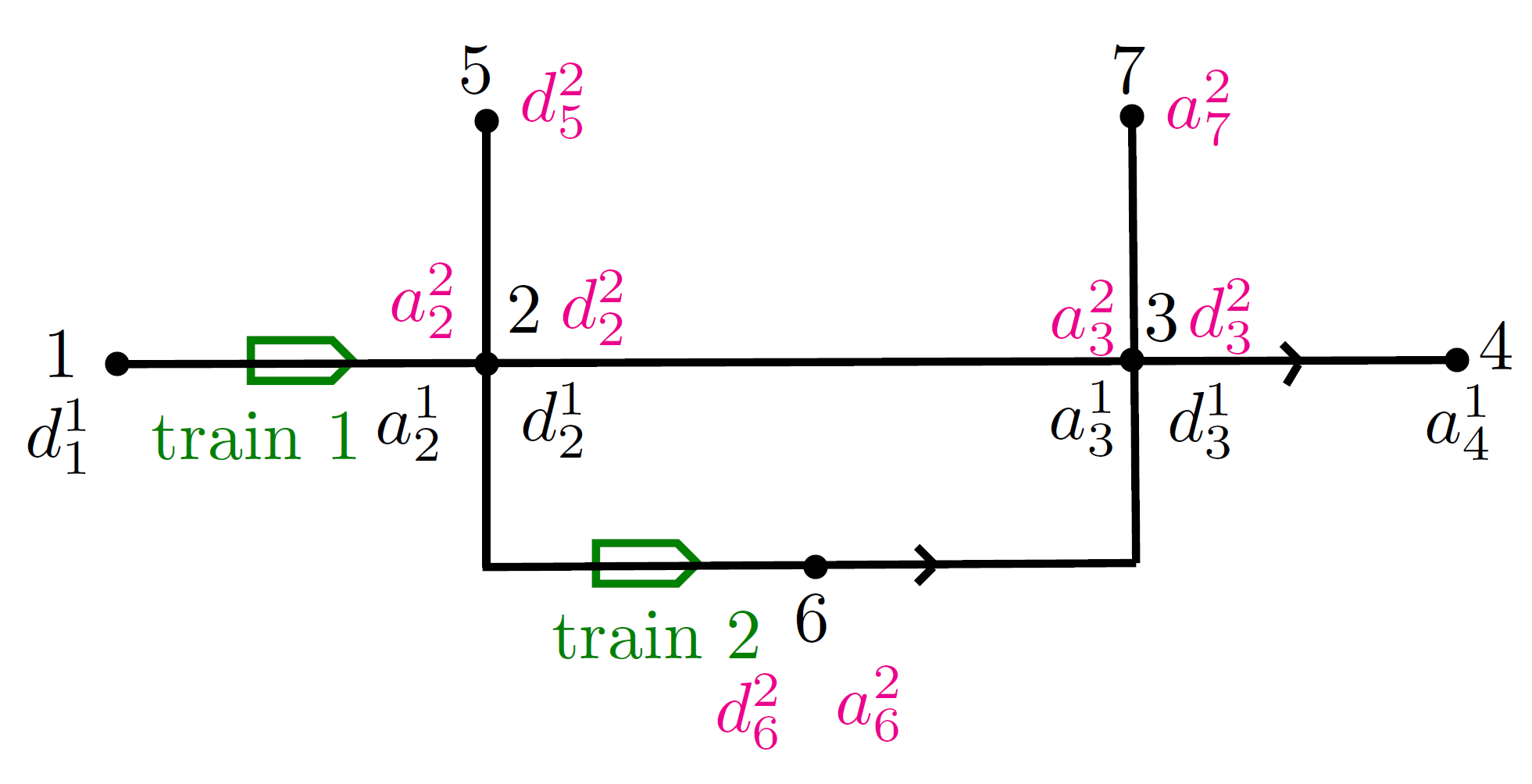}

\caption{Example of a very simple railway network. \label{example1}}
\end{figure}

\begin{figure}[h]
\includegraphics[scale=0.3]{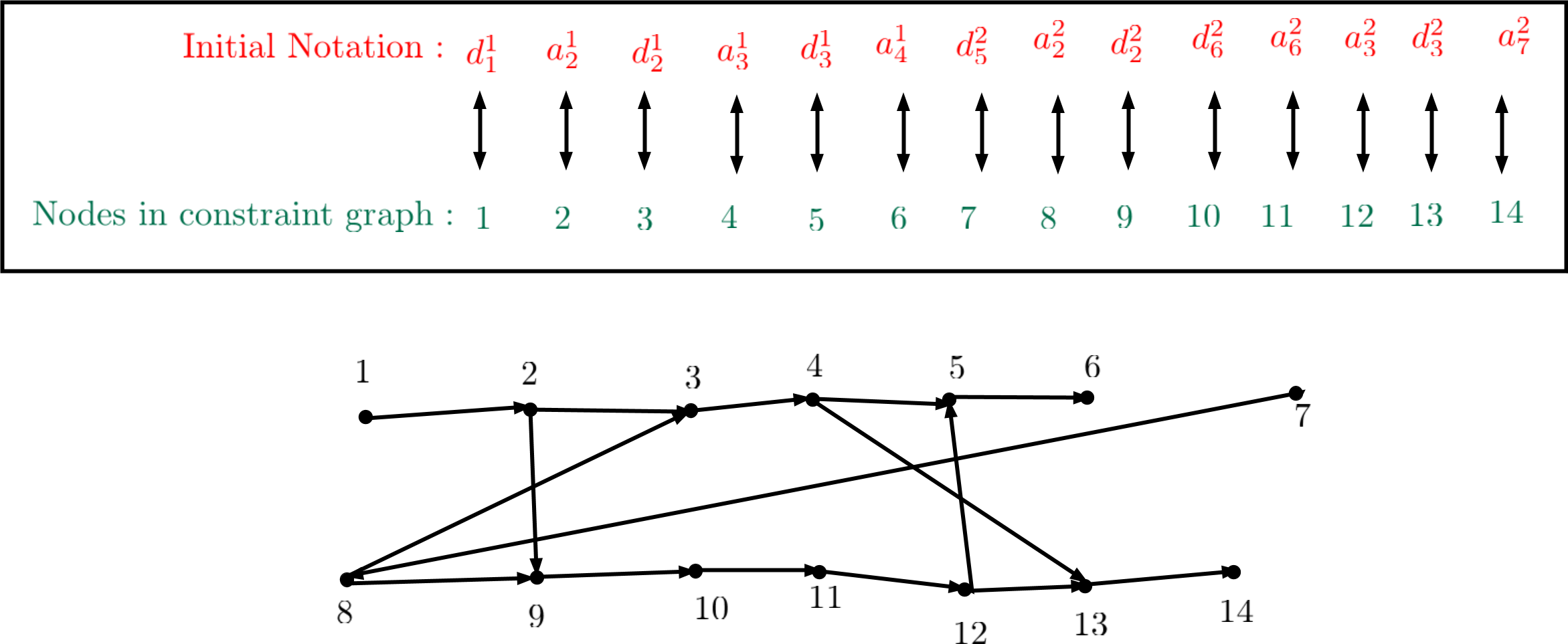}

\caption{The constraint graph for the railway network of Figure~\ref{example1}.}

\label{fig:Constraint Graph} 
\end{figure}

\paragraph{Example}

Figure~\ref{example1} represents a very simple network with 7 stations
represented by black dots and 2 trains represented by the directed
rectangles. The pointed edge of the symbol indicates the direction.
In this network, $\mathcal{T}={\color{black}\{1,2\}}$ and and the
stations are enumerated as $\{1,2,3,4,5,6,7\}$. There are two train
lines as shown in the figure. It should be noted that nodes represent
stations, not platfrms. Nodes 2 and 3 represent interchange stations,
where there are two platforms for both lines on different levels.
So, the event times at node 2 and node 3 are associated with different
platforms and are differentiated using black and magenta colors. The
set of tracks visited by train $1$ is $\mathcal{A}^{{\color{black}1}}=\{(1,2),(2,3),(3,4)\}$,
and the set of tracks visited by train $2$ is $\mathcal{A}^{{\color{black}2}}=\{(5,2),(2,6),(6,3),(3,7)\}$.
The set of all tracks is then $\mathcal{A}=\mathcal{A}^{{\color{black}1}}\cup\mathcal{A}^{{\color{black}2}}$.
The event times corresponding to train $1$ and train $2$ are shown
in black and magenta colours respectively in Figure~\ref{example1}.
Applying the conversion process described above we can convert the
initial notation in Figure~\ref{example1} to the constraint graph
shown in Figure~\ref{fig:Constraint Graph}. .

\subsection{Formulation of the first optimization model}

A train consumes most of its required electrical energy during the
acceleration phase of making a trip from an origin platform to a destination
platform. Trip time constraints play the most important role in energy
consumption and regenerative energy production of trains. Once the
trip time for a trip is fixed, an energy optimal speed profile can
be calculated efficiently by existing software \citep{selTrac}, \citep[page 285]{Howlett1995},
such as Thales \textbf{T}rain \textbf{K}inetics, \textbf{D}ynamics
and \textbf{C}ontrol (TKDC) Simulator in our case. The TKDC simulator
assumes maximum accelerating - speed holding - coasting - maximum
braking strategy for calculation of speed profile. Theoretically this
is the optimal speed \textcolor{black}{profile according to the monograph
\citep{Howlett1995}. For calculation of the optimal speed profile
of a train while making a trip, we refer the interested reader to
the highly cited papers \citep{Jiaxin1993,Howlett2000,Howlett2009,Khmelnitsky2000,Liu2003}.
An excellent state-of-the art review of calculating energy-efficient
or energy-optimal speed profile can be found in \citep{Albrecht2015}.
The key assumptions that we have made in calculating the speed profiles,
all of which are well-established in relevant literature \citep{Albrecht2015},
are as follows.}
\begin{itemize}
\item \textcolor{black}{The acceleration and braking control forces are
assumed to be uniformly bounded or bounded by magnitude constraints.
These magnitude constraints are assumed to be dependent on the speed
of the train and non-increasing.}
\item \textcolor{black}{The initial speed of the train from the origin platform
and final speed at the destination platform are assumed to be zero.
The speed is assumed to be strictly positive at every other position.}
\item \textcolor{black}{Only the force associated with positive acceleration
consumes energy. }
\item \textcolor{black}{The rate of energy dissipation from frictional resistance
is assumed to be a quadratic function of the form $a_{0}+a_{1}v+a_{2}v^{2}$
(also known as the }\textcolor{black}{\emph{Davis formula}}\textcolor{black}{{}
\citep{Davis1926}), where $v$ stands for speed of the train and
$a_{0},a_{1},a_{2}$ are non-negative physical constants. These constants
can be determined approximately in practice by measuring the difference
between the applied tractive force and the net tractive force. }
\item \textcolor{black}{We have assumed the position of the train to be
an independent variable and speed of the train to be a dependent variable.
This assumption appears all the major contributions. This assumption
appears all the major contributions. A notable exception is \citep{Khmelnitsky2000},
which considers kinetic energy and time as the dependent variables
and position as the independent variable.}
\end{itemize}
The electrical power consumption and regeneration of a train on a
track is determined by its speed profile, so the optimal speed profile
also gives the power versus time graph (\emph{power graph} in short)
for that trip. However, in the total railway service period there
are many active trains, whose movements are coupled by the associated
constraints. So, finding the energy-minimal trip time for a single
trip in an isolated manner can result in a infeasible timetable. Consider
an arc $(i,j)\in\bar{\mathcal{A}}_{\textrm{trip}}$ in the constraint
graph, associated with some trip time constraint. Let us denote the
energy consumption function for that trip $f_{ij}:\reals_{++}\rightarrow\reals_{++}$
with argument $(x_{j}-x_{i})$. The first optimization problem with
the objective to minimize the total energy consumption of the trains
can be written as: 
\begin{equation}
\begin{array}{ll}
\mbox{minimize} & \sum_{(i,j)\in\bar{\mathcal{A}}_{\textrm{trip}}}f_{ij}(x_{j}-x_{i})\\
\mbox{subject to} & l_{ij}\leq x_{j}-x_{i}\leq u_{ij},\qquad\forall(i,j)\in\bar{\mathcal{A}}\\
 & 0\leq x_{i}\leq m,\qquad\forall i\in\bar{\mathcal{N}},
\end{array}\label{preOpt1}
\end{equation}
where the decision vector is $(x_{i})_{i\in\bar{\mathcal{N}}}\in\reals^{|\bar{\mathcal{N}}|}$.

The exact analytical form of every component of the objective function,
\emph{i.e.}, $f_{ij}(x_{j}-x_{i})$ for $(i,j)\in\bar{A}_{\textrm{trip}}$
is not known and may be intractable \citep{Howlett2009}. However,
irrespective of the exact analytical form, the energy function can
be shown to be monotonically decreasing in trip time, \emph{i.e.},
it is \emph{non-increasing} with the increase in trip time, if the
optimal speed profile is followed \citep{Milroy1980}. Even when a
train is manually driven with possibly suboptimal driving strategies,
the average energy consumption of the train is found empirically to
be monotonically decreasing in the trip time \citep{Pena-Alcaraz2012a}.

Also, the energy function is relatively easy to measure in practice
\citep[Section 1.5]{Howlett1995}. For any $(i,j)\in\bar{A}_{\textrm{trip}}$,
we denote the measured trip times $(x_{j}^{(1)}-x_{i}^{(1)}),\ldots,(x_{j}^{(p)}-x_{i}^{(p)})$
and the corresponding energy consumption data $f_{ij}^{(1)},\ldots,f_{ij}^{(p)}$.

In any subway system, the amount by which the trip time is allowed
to vary in Equations \eqref{eq:TripTimeConstraint} and \eqref{eq:turnAroundConstraint}
is typically on the order of seconds \citep{Liebchen2007}, which
motivates us to make the following assumption.

\begin{assumption} \label{assum:triptime}
 The amount by which the trip time is allowed to vary is on the order of seconds, i.e., for any  the trip time window  is on the order of seconds.
\end{assumption}

The monotonically decreasing nature of the energy function together
with Assumption \ref{assum:triptime} allows us to approximate the
energy function $f_{ij}(x_{j}-x_{i})$ as an affine function. Recall
that in practice, we can measure the energy $f_{ij}^{(1)},\ldots,f_{ij}^{(p)}$
and associated trip times $(x_{j}^{(1)}-x_{i}^{(1)}),\ldots,(x_{j}^{(p)}-x_{i}^{(p)})$,
which is obtainable easily with present technology \citep[Section 1.5]{Howlett1995}.

Now we want to formulate an optimization problem which will provide
us with the best possible affine approximation of the energy function
$f_{ij}(x_{j}-x_{i})$. We do so by applying least-squares and fit
a straight line through measured energy versus trip time data. We
seek an affine function $c_{ij}(x_{j}-x_{i})+b_{ij}=(x_{j}-x_{i},1)^{T}(c_{ij},b_{ij})$
where we want to determine $c_{ij}$ and $b_{ij}$. 

The affine function approximates the measured energy in the least-squares
sense as follows: 
\begin{align}
(c_{ij},b_{ij})= & \argmin_{(\tilde{c}_{ij},\tilde{b}_{ij})}\sum_{k=1}^{p}\left(\tilde{c}_{ij}(x_{j}^{(k)}-x_{i}^{(k)})+\tilde{b}_{ij}-f_{ij}^{(k)}\right)^{2}\nonumber \\
= & \argmin_{(\tilde{c}_{ij},\tilde{b}_{ij})}\left\Vert \begin{bmatrix}(x_{j}^{(1)}-x_{i}^{(1)},1)^{T}\\
\vdots\\
(x_{j}^{(p)}-x_{i}^{(p)},1)^{T}
\end{bmatrix}\begin{bmatrix}\tilde{c}_{ij}\\
\tilde{b}_{ij}
\end{bmatrix}-\begin{bmatrix}f_{ij}^{(1)}\\
\vdots\\
f_{ij}^{(p)}
\end{bmatrix}\right\Vert _{2}^{2}
\end{align}
The problem above is an unconstrained optimization problem with convex
quadratic differentiable objective. So it can be solved by taking
the gradient with respect to $(\tilde{c}_{ij},\tilde{b}_{ij})$, setting
the result equal to zero vector and then solving for ${(\tilde{c}_{ij},\tilde{b}_{ij})}$.
This yields the following closed form solution: 
\begin{align}
\begin{bmatrix}c_{ij}\\
b_{ij}
\end{bmatrix}=\left(\begin{bmatrix}(x_{j}^{(1)}-x_{i}^{(1)},1)^{T}\\
\vdots\\
(x_{j}^{(p)}-x_{i}^{(p)},1)^{T}
\end{bmatrix}^{T}\begin{bmatrix}(x_{j}^{(1)}-x_{i}^{(1)},1)^{T}\\
\vdots\\
(x_{j}^{(p)}-x_{i}^{(p)},1)^{T}
\end{bmatrix}\right)^{-1}\begin{bmatrix}(x_{j}^{(1)}-x_{i}^{(1)},1)^{T}\\
\vdots\\
(x_{j}^{(p)}-x_{i}^{(p)},1)^{T}
\end{bmatrix}^{T}\begin{bmatrix}f_{ij}^{(1)}\\
\vdots\\
f_{ij}^{(p)}
\end{bmatrix}\label{eq: Least Square Solution}
\end{align}

Using Equation~\eqref{eq: Least Square Solution}, we can approximate
the nonlinear objective of the optimization problem \eqref{preOpt1}
as an affine one: $\sum_{(i,j)\in\bar{\mathcal{A}}_{\textrm{trip}}}c_{ij}(x_{i}-x_{j})+b_{ij}$.
A measurement of the quality of such fittings is given by the \emph{coefficient
of determination}, which can vary between 0 to 1, with 0 being the
worst and 1 being the best \citep[page 518]{Kapadia2005}. In our
numerical studies the mean coefficient of determination of the energy
fittings over all the different trips of all the trains is found to
be 0.9483 with a standard deviation of 0.05, which justifies our approach.
We can also discard the $b_{ij}$s from the objective, as it has no
impact on the minimizer. Thus we arrive at the following linear optimization
problem to minimize the total energy consumption of the trains: 
\begin{equation}
\begin{array}{ll}
\textnormal{minimize} & \sum_{(i,j)\in\bar{\mathcal{A}}_{\textrm{trip}}}c_{ij}(x_{j}-x_{i})\\
\textnormal{subject to} & l_{ij}\leq x_{j}-x_{i}\leq u_{ij},\qquad\forall(i,j)\in\bar{\mathcal{A}}\\
 & 0\leq x_{i}\leq m,\qquad\forall i\in\bar{\mathcal{N}}.
\end{array}\label{preOpt2}
\end{equation}

Note that, we have not used the same cost-time curve for all the trips.
Each of the constituent parts $c_{ij}(x_{j}-x_{i})$ of the objective
function $\sum_{(i,j)\in\bar{\mathcal{A}}_{\textrm{trip}}}c_{ij}(x_{j}-x_{i})$
in Problem (12) represents approximated affine function for each of
the trips considered in the optimization problem. If optimal speed
profiles for the trips are available to the railway management, from
the first optimization model the optimal trip times for those optimal
speed profiles can be found. If available speed profiles are suboptimal,
then the first optimization model would still produce an energy-efficient
timetable with best trip times subject to the available speed profiles.

An important property of this optimization model is that the polyhedron
associated with optimization problem has only integer vertices, so
the optimal value is attained by an integral vector. A necessary and
sufficient condition of integrality of the vertices of a polyhedron
is given by the following theorem \citep[page 269, Theorem 19.3]{Schrijver1998},
which we will use to prove the subsequent proposition. 
\begin{thm}
\label{IntegralityTheorem} Let $A$ be a matrix with entries $0,+1$,
or $-1$. For all integral vectors $a,b,c,d$ the polyhedron $\{x\in\reals^{n}\mid c\preceq x\preceq d,a\preceq Ax\preceq b\}$
has only integral vertices if and only if for each nonempty collection
of columns of $A$, denoted by $C$, there exist two subsets , $C_{1}$
and $C_{2}$ such that $C_{1}\cup C_{2}=C,C_{1}\cap C_{2}=\emptyset$,
and the sum of the columns in $C_{1}$ minus the sum of the columns
in $C_{2}$ is a vector with entries $0,1$ and $-1$. 
\end{thm}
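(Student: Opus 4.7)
The plan is to recognize this statement as the combination of two classical results in polyhedral combinatorics: Ghouila-Houri's column characterization of totally unimodular matrices and the Hoffman-Kruskal integrality theorem. Recall that a matrix is totally unimodular (TU) if every square submatrix has determinant in $\{-1,0,+1\}$. The strategy is to show first that the column-partition condition is equivalent to $A$ being TU, and then that $A$ being TU is equivalent to the integrality of the vertices of every polyhedron of the stated form with integer bounds.

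For the equivalence between the partition condition and TU, I would prove the ``partition implies TU'' direction by strong induction on the order $k$ of a square submatrix $B$. If the hypothesized signed column sum vanishes, the columns of $B$ are linearly dependent and $\det B = 0$. Otherwise, the $\{-1,0,+1\}$-valued signed-sum vector can be exploited, via row reduction and cofactor expansion, to reduce the determinant computation to $(k-1)\times(k-1)$ submatrices, where the induction hypothesis yields $|\det B|\leq 1$. The converse direction (``TU implies partition'') is the harder half; the standard route is to formulate, for each fixed column collection $C$, an auxiliary linear system whose feasibility combined with half-integrality (guaranteed by applying TU to a suitably chosen subsystem) produces a $\{\pm 1\}$-signing of $C$ that yields the desired partition $C_{1},C_{2}$.

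For the equivalence between TU and integer vertices, I would argue as follows. Given any vertex $\bar x$ of $\{x\in\reals^n\mid c\preceq x\preceq d,\; a\preceq Ax\preceq b\}$, select $n$ linearly independent active constraints. Rearranging produces a nonsingular square system $B\bar x=q$, where $B$ is a submatrix of $\begin{bmatrix} A \\ I \end{bmatrix}$ and $q$ is an integral vector assembled from entries of $a,b,c,d$. Since $A$ is TU, so is $B$, hence $|\det B|=1$, and Cramer's rule forces $\bar x$ to be integral. For the converse, I would construct explicit integral data $a,b,c,d$ that realize, as a vertex, coordinates encoding $\det B$ for an arbitrary square submatrix $B$ of $A$; non-integrality of that vertex would contradict the integrality hypothesis, forcing $\det B\in\{-1,0,+1\}$.

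The main obstacle is the ``TU implies partition'' direction of Ghouila-Houri's theorem, as the other three implications follow mechanically once the right objects are in place. I would address it by invoking linear programming duality on the column-restricted system rather than attempting a purely combinatorial construction, since the LP formulation exposes the half-integral extreme points that directly furnish the required column partition.
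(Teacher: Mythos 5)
First, a point of comparison: the paper does not prove this statement at all --- it is quoted verbatim from Schrijver (Theorem~19.3 there) and used as a black box, so your proposal can only be assessed on its own terms. Your decomposition is the standard and correct one: the column-partition condition is Ghouila--Houri's characterization of total unimodularity, and the equivalence of total unimodularity with integrality of $\{x\mid c\preceq x\preceq d,\ a\preceq Ax\preceq b\}$ for all integral $a,b,c,d$ is the Hoffman--Kruskal theorem. Three of your four implications are sound in outline: TU $\Rightarrow$ integral vertices via Cramer's rule on a nonsingular tight subsystem drawn from the rows of $A$ and of the identity (appending unit rows preserves total unimodularity); integral vertices $\Rightarrow$ TU by exhibiting vertices that force $B^{-1}$ to be integral for every nonsingular square submatrix $B$, whence $\det B=\pm1$; and TU $\Rightarrow$ partition by taking the characteristic vector $\ones_{C}$ of the column set, observing that $\tfrac12\ones_{C}$ lies in the integral polytope $\{x\mid 0\preceq x\preceq\ones_{C},\ \lfloor\tfrac12 A\ones_{C}\rfloor\preceq Ax\preceq\lceil\tfrac12 A\ones_{C}\rceil\}$, and rounding to an integral vertex $z$, with $C_{1},C_{2}$ read off from $\ones_{C}-2z$. (That last step is polyhedral integrality rather than LP duality as you suggest, but the half-integral point you invoke is the right object.)

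The genuine gap is in your induction for ``partition $\Rightarrow$ TU.'' When the signed column sum of a nonsingular $k\times k$ submatrix $B$ is a nonzero vector $v\in\{0,\pm1\}^{k}$, replacing one column of $B$ by $v$ and cofactor-expanding along that column writes $\det B$ as a sum of up to $k$ terms, each a $(k-1)\times(k-1)$ minor times an entry of $v$; the induction hypothesis then gives only $|\det B|\leq k$, not $|\det B|\leq 1$, so the step as described does not close. The standard repair runs through the adjugate: by the induction hypothesis all cofactors of $B$ lie in $\{0,\pm1\}$, so $y:=\det(B)\,B^{-1}e_{1}\in\{0,\pm1\}^{k}$ and $By=\det(B)\,e_{1}$; hence for each row $i\geq2$ the number of columns $j$ in the support $C$ of $y$ with $b_{ij}\neq0$ is even. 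Applying the partition hypothesis to this $C$ gives $w\in\{0,\pm1\}^{k}$ supported on $C$ with $Bw\in\{0,\pm1\}^{k}$, and the parity observation forces $(Bw)_{i}=0$ for $i\geq2$; $Bw=0$ would contradict nonsingularity, so $Bw=\pm e_{1}=\pm\det(B)^{-1}By$, giving $w=\pm\det(B)^{-1}y$ and therefore $|\det B|=1$ by comparing any nonzero coordinate. Without this argument (or an equivalent detour, such as Schrijver's chain through the ``no subdeterminant equal to $\pm2$'' condition and Gomory's lemma), the harder half of Ghouila--Houri remains unproved, and with it the ``if'' direction of the theorem.
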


\begin{prop}
The optimization problem \eqref{preOpt2} has an integral optimal
solution. 
\end{prop}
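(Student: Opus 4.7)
The plan is to apply Theorem \ref{IntegralityTheorem} directly to Problem \eqref{preOpt2}. First I would recast the problem into the form required by the theorem: the decision vector is $x=(x_i)_{i\in\bar{\mathcal{N}}}\in\reals^{|\bar{\mathcal{N}}|}$, the variable bounds $0\preceq x\preceq m\ones$ are integral, and the arc constraints $l_{ij}\leq x_j-x_i\leq u_{ij}$ assemble into $a\preceq Ax\preceq b$ with $a=(l_{ij})_{(i,j)\in\bar{\mathcal{A}}}$ and $b=(u_{ij})_{(i,j)\in\bar{\mathcal{A}}}$, both integral since every time bound in Section \ref{modellingConstraints} is specified as an integer number of seconds. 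The key observation is that the row of $A$ indexed by an arc $(i,j)\in\bar{\mathcal{A}}$ has a $-1$ in column $i$, a $+1$ in column $j$, and zeros elsewhere; hence $A$ is exactly the node-arc incidence matrix of the directed constraint graph $(\bar{\mathcal{N}},\bar{\mathcal{A}})$, and all its entries lie in $\{-1,0,+1\}$ as the theorem demands.

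Next I would verify the combinatorial column-partitioning condition of Theorem \ref{IntegralityTheorem}. Given any nonempty collection of columns $C\subseteq\bar{\mathcal{N}}$, I claim the trivial partition $C_1=C$, $C_2=\emptyset$ always works. Indeed, fix any row (arc) $(i,j)$; the corresponding coordinate of $\sum_{k\in C_1}A_{\cdot,k}-\sum_{k\in C_2}A_{\cdot,k}$ equals $+1$ if $j\in C$ and $i\notin C$, equals $-1$ if $i\in C$ and $j\notin C$, and equals $0$ if $\{i,j\}\subseteq C$ or $\{i,j\}\cap C=\emptyset$. All four cases produce a value in $\{-1,0,+1\}$, so Schrijver's condition holds.

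By Theorem \ref{IntegralityTheorem}, every vertex of the feasible polyhedron of \eqref{preOpt2} is then integral. The polyhedron is bounded (each $x_i$ lies in $[0,m]$) and nonempty (the existing operational timetable cited in Section \ref{modellingConstraints} provides a feasible point), so the linear objective attains its minimum at a vertex, yielding an integral optimizer as claimed. The substance of the argument is really the identification of $A$ — after the change of notation in Section \ref{Preli_Opt} from $(a_i^t,d_i^t)$ to the single variable $x_i$ — as the incidence matrix of a directed graph; once that identification is made, the total unimodularity of such matrices makes the Schrijver hypothesis immediate via the trivial partition, and no further combinatorial work is required.
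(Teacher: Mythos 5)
Your proposal is correct and follows essentially the same route as the paper's proof: both cast \eqref{preOpt2} in the vector form $l\preceq Ax\preceq u$, $0\preceq x\preceq m\ones$ with $A$ the node--arc incidence matrix of the constraint graph, and both verify Schrijver's condition via the trivial partition $C_{1}=C$, $C_{2}=\emptyset$, using the fact that each row has exactly one $+1$ and one $-1$. Your explicit four-case check of the row sums and the remark that boundedness and nonemptiness guarantee attainment at a vertex are slightly more detailed than the paper's version, but the argument is the same.
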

\begin{proof}
We write the problem \eqref{preOpt2} in vector form. We construct
a cost vector $c$, such that a component of that vector is $c_{ij}$
if it is associated with a trip time constraint in the original notation,
and zero otherwise. Construct integral vectors $l=(l_{ij})_{(i,j)\in\bar{\mathcal{A}}}$,
$u=(u_{ij})_{(i,j)\in\bar{\mathcal{A}}}$ and matrix $A\in\{-1,0,1\}^{|\bar{\mathcal{A}}|\times|\bar{\mathcal{N}}|}$
such that the $(k,i)$th entry of the matrix $A$, denoted by $a_{ki}$,
is associated with the $k$th hyperarc and $i$th node of the constraint
graph as follows: 
\[
a_{ki}=\begin{cases}
1\quad\text{if node \ensuremath{i} is the end node of hyperarc k,}\\
-1\quad\text{if node \ensuremath{i} is the start node of hyperarc k,}\\
0\quad\text{otherwise.}
\end{cases}
\]
So, the vector form of the optimization problem \eqref{preOpt2} is:
\begin{equation}
\begin{array}{ll}
\mbox{minimize} & c^{T}x\\
\mbox{subject to} & l\preceq Ax\preceq u,\\
 & 0\preceq x\preceq m\ones.
\end{array}\label{preOptVector}
\end{equation}
Consider any nonempty collection of columns of $A$ denoted by $C$.
Take $C_{1}=C$ and $C_{2}=\emptyset$. Then the sum of the columns
in $C_{1}$ minus the sum of the columns in $C_{2}$ will be a vector
with entries $0,1$ and $-1$, because in $A$ there cannot exist
more than one row corresponding to an arc between two nodes of the
constraint graph and each such row has exactly two nonzero entries,
a $+1$ and a $-1$. So, by Theorem~\ref{IntegralityTheorem} the
polyhedron $\{x\in\reals^{|\bar{\mathcal{N}}|}:l\preceq Ax\preceq u,0\preceq x\preceq m\ones\}$
has only integral vertices and optimizing the linear objective in
problem~\eqref{preOptVector} over this polyhedron will result in
an integral solution.
\end{proof}
After solving the linear programming problem \eqref{preOpt2}, we
obtain an integral timetable, which we will call the energy minimizing
timetable (\textbf{EMT}). We denote the optimal decision vector of
this timetable by $\bar{x}$ in the constraint graph notation and
$\left((\bar{a}_{i}^{t},\bar{d}_{i}^{t})_{i\in\mathcal{N}}\right)_{t\in\mathcal{T}}$
in the original notation.

\section{Second optimization model}

\label{Final_Opt} In this section we modify the trip time constraints
such that the total energy consumption of the final timetable is kept
at the same minimum as the EMT. Then, we describe our optimization
strategy aimed to maximize the utilization of regenerative energy
of braking trains, and we present the second optimization model.

\subsection{Keeping the total energy consumption at minimum}

In any feasible timetable, if the trip times are kept to be the same
as the ones obtained from the EMT, then the energy optimal speed profiles
for all trains will be the same. As a result, the energy consumption
associated with that timetable will remain at the same minimum as
found in the EMT. So, in the second optimization problem, instead
of using the trip time constraint, for every trip we fix the trip
time to the value in the EMT, \emph{i.e.,} 
\begin{align}
\forall t\in\mathcal{T},\quad\forall(i,j)\in\mathcal{A}^{t},\quad a_{j}^{t}-d_{i}^{t}=\bar{a}_{j}^{t}-\bar{d}_{i}^{t},\label{eq:TripTimeConstraintSynModel}
\end{align}
and 
\begin{align}
\forall(i,j)\in\varphi,\quad\forall(t,t')\in\mathcal{B}_{ij},\quad a_{j}^{t'}-d_{i}^{t}=\bar{a}_{j}^{t'}-\bar{d}_{i}^{t}.\label{eq:TurnAroundConstraintSyncModel}
\end{align}
For all other constraints, bounds are allowed to vary as described
by Equations \eqref{eq:DwellTimeConstraint}-\eqref{eq:TotalTravelTimeConstraints}.
As a consequence of fixing all trip times, the power graph of every
trip made by any train becomes known to us, since it depends on the
corresponding optimal speed profile calculated in real-time by existing
software \citep{selTrac}, \citep[page 285]{Howlett1995}.

\subsection{Maximizing the utilization of regenerative energy of braking trains}

In this subsection we describe our strategy to maximize the utilization
of the regenerative energy produced by the braking trains. Strategies
based on transfer of regenerative braking energy back to the electrical
grid requires specialized technology such as reversible electrical
substations \citep{t2kreport}. A strategy based on storing is not
feasible with present technology, because storage options such as
super-capacitors, fly-wheels, \emph{e.t.c.}, have drastic discharge
rates besides being too expensive \citep[page 66]{nano2014}, \citep[page 92]{Droste-Franke2012}.
A better strategy that can be used with existing technology \citep{bombardier}
is to transfer the regenerative energy of a braking train to a nearby
and simultaneously accelerating train, if both of them operate under
the same electrical substation. We call such pairs of trains \emph{suitable
train pairs}. So our objective is to maximize the total overlapped
area between the graphs of power consumption and regeneration of all
suitable train pairs. To model this mathematically, we are faced with
the following tasks: \emph{i)} define suitable train pairs, \emph{ii)}
provide a tractable description of the overlapped area between power
graphs of such a pair. We describe them as follows. 
\begin{figure}[h]
\includegraphics{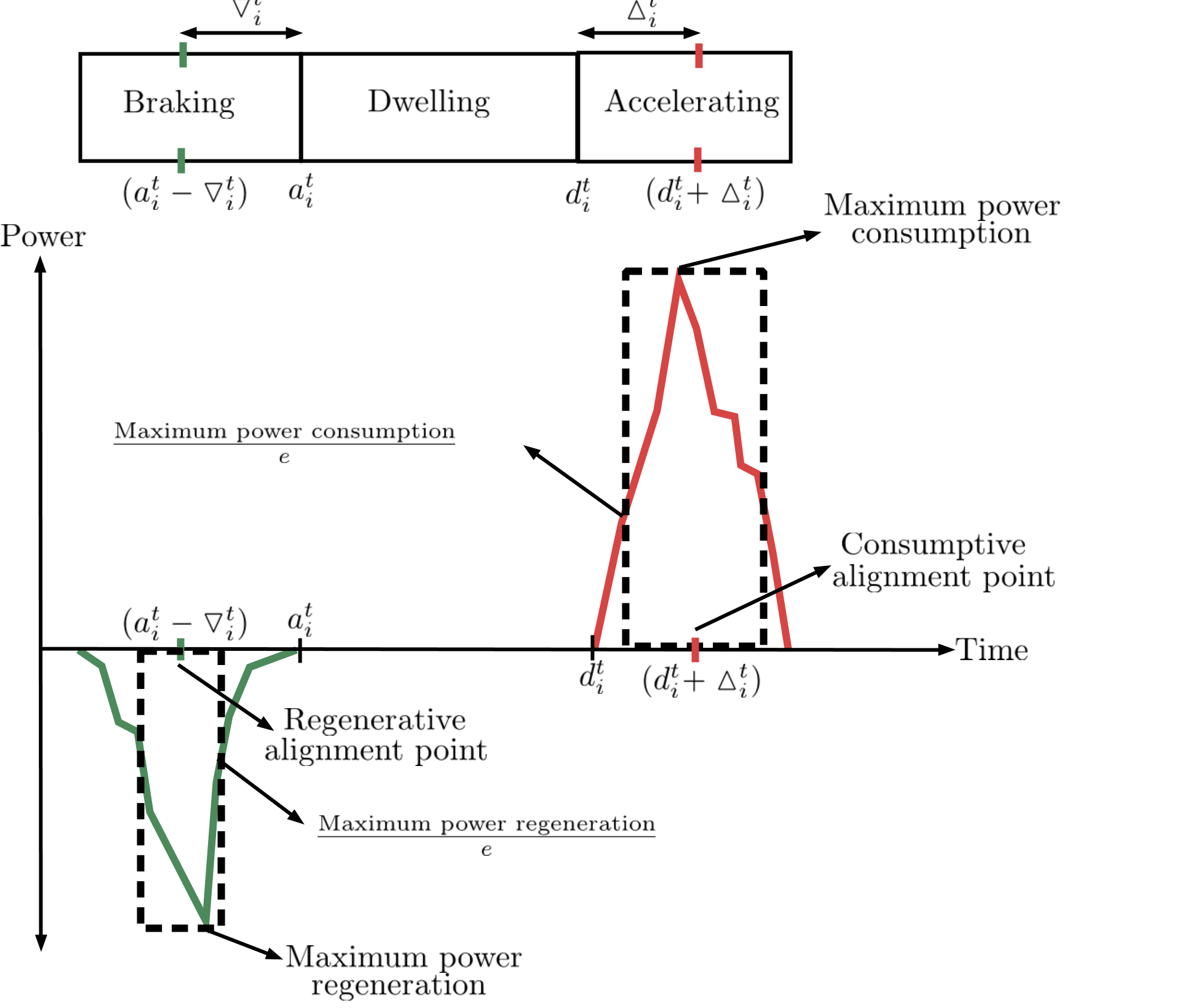} \caption{Applying $\frac{1}{e}$ heuristic to power graphs }
\label{Notation} 
\end{figure}

\subsubsection{Defining suitable train pairs}

We consider platform pairs who are opposite to each other and are
powered by the same electrical substations. Thus, the transmission
loss in transferring electrical energy between them is negligible.
In our work we The set containing all such platform pairs is denoted
by $\Omega$. Consider any such platform pair $(i,j)\in\Omega$, and
let $\mathcal{T}_{i}\subseteq\mathcal{T}$ be the set of all trains
which arrive at, dwell and then depart from platform $i$. Suppose,
$t\in\mathcal{T}_{i}$. Now, we are interested in finding another
train $\tilde{t}$ on platform $j$, \emph{i.e.}, $\tilde{t}\in\mathcal{T}_{j}$,
which along with $t$ would form a suitable pair for the transfer
of regenerative braking energy. To achieve this, we use the EMT. Among
all trains going through platform $j$, the one which is temporally
closest to $t$ in the energy-minimizing timetable is be the best
candidate to form a pair with $t$. The temporal proximity can be
of two types with respect to $t$, which results in the following
definitions.
\begin{defn}
Consider any $(i,j)\in\Omega$. For every train $t\in\mathcal{T}_{i}$,
the train $\overset{\rightharpoonup}{t}\in\mathcal{T}_{j}$ is called
the \textbf{temporally closest train to the right of $t$} if 
\begin{align}
\overset{\rightharpoonup}{t}=\underset{t'\in\{x\in\mathcal{T}_{j}:0\leq\frac{\bar{a}_{j}^{x}+\bar{d}_{j}^{x}}{2}-\frac{\bar{a}_{i}^{t}+\bar{d}_{i}^{t}}{2}\leq r\}}{\textnormal{argmin}}\left\{ \left|\frac{\bar{a}_{i}^{t}+\bar{d}_{i}^{t}}{2}-\frac{\bar{a}_{j}^{t'}+\bar{d}_{j}^{t'}}{2}\right|\right\} ,\label{eq:temporallyClosestTrainRight}
\end{align}
where $r$ is an empirical parameter determined by the timetable designer
and is much smaller than the time horizon of the entire timetable. 
\end{defn}

\begin{defn}
Consider any $(i,j)\in\Omega$. For every train $t\in\mathcal{T}_{i}$,
the train $\overset{\leftharpoonup}{t}\in\mathcal{T}_{j}$ is called
the \textbf{temporally closest train to the left of $t$} if 
\begin{align}
\overset{\leftharpoonup}{t}=\underset{t'\in\{x\in\mathcal{T}_{j}:0<\frac{\bar{a}_{i}^{t}+\bar{d}_{i}^{t}}{2}-\frac{\bar{a}_{j}^{x}+\bar{d}_{j}^{x}}{2}\leq r\}}{\textnormal{argmin}}\left\{ \left|\frac{\bar{a}_{i}^{t}+\bar{d}_{i}^{t}}{2}-\frac{\bar{a}_{j}^{t'}+\bar{d}_{j}^{t'}}{2}\right|\right\} .\label{eq:temporallyClosestTrainLeft}
\end{align}
\end{defn}

\begin{defn}
Consider any $(i,j)\in\Omega$. For every train $t\in\mathcal{T}_{i}$,
the train $\tilde{t}\in\mathcal{T}_{j}$ is called the \textbf{temporally
closest train to $t$} if 
\begin{align}
\tilde{t}=\underset{t'\in\{\overset{\rightharpoonup}{t},\overset{\leftharpoonup}{t}\}}{\textnormal{argmin}}\left\{ \left|\frac{\bar{a}_{i}^{t}+\bar{d}_{i}^{t}}{2}-\frac{\bar{a}_{j}^{t'}+\bar{d}_{j}^{t'}}{2}\right|\right\} .\label{eq:temporallyClosestTrain}
\end{align}
If both $\overset{\rightharpoonup}{t}$ and $\overset{\leftharpoonup}{t}$
are temporally equidistant from $t$, we pick one of them arbitrarily. 
\end{defn}
Any synchronization process between a suitable train pair (SPSTP)
can be described by specifying the corresponding $i$, $j$, $t$
and $\tilde{t}$ by using the definitions above. We construct a set
of all SPSTPs, which we denote by $\mathcal{E}$. Each element of
this set is a tuple of the form $(i,j,t,\tilde{t})$. Because $\tilde{t}$
is unique for any $t$ in each element of $\mathcal{E}$, we can partition
$\mathcal{E}$ into two sets denoted by $\overset{\rightharpoonup}{\mathcal{E}}$
and $\overset{\leftharpoonup}{\mathcal{E}}$ containing elements of
the form $(i,j,t,\overset{\rightharpoonup}{t})$ and $(i,j,t,\overset{\leftharpoonup}{t})$
respectively.
\begin{rem*}
\textcolor{black}{Now we compare our design choice to that of \citep{Yang2013},
which considers two additional scenarios: two trains 1) on two consecutive
platforms on the same track, or 2) on the same platform on same track.
In the first case, there will of course be a significant distance
between the two trains, and in the second case, we need to maintain
at least the safety distance between the two trains irrespective of
how small the headway is. First, to transfer the regenerative energy
to the accelerating trains, we would need either supercapacitors,
or fly-wheels, both of which have drastic discharge rates \citep[page 66]{nano2014},
\citep[page 92]{Droste-Franke2012}. Additionally, if we consider
resistive transmission loss, in practice there would be very little
or no energy transfer due to the power loss along the line. Note that
in \citep{Yang2013} this issue would not arise, as they have assumed
no transmission loss.}
\end{rem*}

\subsubsection{Description of the overlapped area between power graphs}

The power graph during accelerating and braking is highly nonlinear
in nature with no analytic form, as shown in Figure~\ref{Notation}.
So, maximizing the exact overlapped area will lead to an intractable
optimization problem. However, the existence of dominant peaks with
sharp falls allows us to apply a robust lumping method such as $\frac{1}{e}$
heuristic \citep[page 33-34]{Mahajan2008} to approximate the power
graphs as rectangles. The $\frac{1}{e}$ heuristic is applied as follows
(see Figure~\ref{Notation}). The height of the rectangle is the
maximum power, and the width is the interval with extreme points corresponding
to power dropped at $1/e$ of the maximum. For the sharp drop from
the peak, such rectangles are very robust approximations to the original
power graph containing the most concentrated part of the energy, \emph{e.g.,},
if the drop were exponential, then the energy contained by the rectangle
would have been exactly equal to that of the original curve \citep[page 33-34]{Mahajan2008}.
After converting both the power graphs to rectangles, maximizing the
overlapped area under those rectangles is equivalent to aligning the
midpoint of the width of the rectangles; we call such a midpoint \textbf{regenerative
or consumptive alignment point}. These alignment points act as virtual
peaks of the approximated power graphs. As shown in Figure~\ref{Notation},
for a train $t$ in its braking phase prior to its arrival at platform
$i$, the relative distance of $a_{i}^{t}$ from the regenerative
alignment point is denoted by $\triangledown_{i}^{t}$, while during
acceleration the relative distance of the consumptive alignment point
from $d_{i}^{t}$ is denoted by $\vartriangle_{j}^{t}$. Note that
both relative distances are known parameters for the current optimization
problem.

\subsection{Second optimization model}

Consider an element $(i,j,t,\overset{\rightharpoonup}{t})\in\overset{\rightharpoonup}{\mathcal{E}}$.
To ensure the transfer of maximum possible regenerative energy from
the braking train $\overset{\rightharpoonup}{t}$ to the accelerating
train $t$, we aim to align both their alignment points such that
$d_{i}^{t}+\vartriangle_{i}^{t}=a_{j}^{\overset{\rightharpoonup}{t}}-\triangledown_{j}^{\overset{\rightharpoonup}{t}},$
or keep them as close as possible otherwise. Similarly, for any $(i,j,t,\overset{\leftharpoonup}{t})\in\overset{\leftharpoonup}{\mathcal{E}}$,
our objective is $d_{j}^{\overset{\leftharpoonup}{t}}+\vartriangle_{j}^{\overset{\leftharpoonup}{t}}=a_{i}^{t}-\triangledown_{i}^{t}$
, or as close as possible. Let a decision vector $y$ be defined as
\begin{align}
y=\Big(\big(d_{i}^{t}+\vartriangle_{i}^{t}-a_{j}^{\overset{\rightharpoonup}{t}}+\triangledown_{j}^{\overset{\rightharpoonup}{t}}\big)_{(i,j,t,\overset{\rightharpoonup}{t})\in\overset{\rightharpoonup}{\mathcal{E}}},\big(d_{j}^{\overset{\leftharpoonup}{t}}+\vartriangle_{j}^{\overset{\leftharpoonup}{t}}-a_{i}^{t}+\triangledown_{i}^{t}\big)_{(i,j,t,\overset{\leftharpoonup}{t})\in\overset{\leftharpoonup}{\mathcal{E}}}\Big).\label{eq:defOfy}
\end{align}
Then our goal comprises of two parts: $1$) maximize the number of
zero components of $y$ which corresponds to minimizing $\card(y)$,
and $2$) keep the nonzero components as close to zero as possible
which corresponds to minimizing the $\ell_{1}$ norm of $y$, $\|y\|_{1}$.
Combining these two we can write the exact optimization problem as
follows: 
\begin{equation}
\begin{array}{ll}
\mbox{minimize}\quad\card(y)+\gamma\|y\|_{1}\\
\mbox{subject to}\\
\text{Equations}\eqref{eq:DwellTimeConstraint}-\eqref{eq:TotalTravelTimeConstraints},\eqref{eq:TripTimeConstraintSynModel},\eqref{eq:TurnAroundConstraintSyncModel},\eqref{eq:defOfy},\\
0\leq a_{i}^{t}\leq m,0\leq d_{i}^{t}\leq m,\quad\forall i\in\mathcal{N}^{t},\quad\forall t\in\mathcal{T},
\end{array}\label{syncNonconvexForm}
\end{equation}
where $\gamma$ is a positive weight, and decision variables are $a$,
$d$ and $y$. The objective function is nonconvex as shown next.
Take the convex combination of the vectors $2e_{1}/\gamma$ and $0$
with convex coefficients $1/2$. Then, 
\[
\card\left(\frac{e_{1}}{\gamma}\right)+\gamma\left\Vert \frac{e_{1}}{\gamma}\right\Vert _{1}=2>\frac{1}{2}\left(\card\left(\frac{2e_{1}}{\gamma}\right)+\gamma\left\Vert \frac{2e_{1}}{\gamma}\right\Vert _{1}\right)+\frac{1}{2}\left(\card\left(0\right)+\gamma\left\Vert 0\right\Vert _{1}\right)=1.5,
\]
and thus violates definition of a convex function. As a result, problem~\eqref{syncNonconvexForm}
is a nonconvex problem. Note that if we remove the cardinality part
from the objective, then it reduces to a convex optimization problem
because the constraints are affine and the objective is the $\ell_{1}$
norm of an affine transformation of the decision variables \citep[pages 72, 79, 136-137]{Boyd2009}.
Such problems are often called convex-cardinality problem and are
of $\mathcal{NP}$-hard computational complexity in general \citep{Calafiore14}.
An effective yet tractable numerical scheme to achieve a low-cardinality
solution in a convex-cardinality problem is the $\ell_{1}$ norm heuristic,
where $\card(y)$ is replaced by $\|y\|_{1}$, thus converting problem~\eqref{syncNonconvexForm}
into a convex optimization problem. This is described by problem~\eqref{syncConvexForm}
below. The $\ell_{1}$ norm heuristic is supported by extensive numerical
evidence with successful applications to many fields, \emph{e.g.,},
robust estimation in statistics, support vector machine in machine
learning, total variation reconstruction in signal processing, compressed
sensing \emph{etc}. In the next section we show that in our problem
too, the $\ell_{1}$ norm heuristic produces excellent results. Intuitively,
the $\ell_{1}$ norm heuristic works well, because it encourages sparsity
in its arguments by incentivizing exact alignment between regenerative
alignment points with the associated consumptive ones \citep[pages 300-301]{Boyd2009}.
We provide a theoretical justification for the use of $\ell_{1}$
norm in our case as follows.
\begin{prop}
The convex optimization problem described by 
\begin{equation}
\begin{array}{ll}
\textnormal{minimize}\quad\|y\|_{1}\\
\textnormal{subject to}\\
\textnormal{Equations}\eqref{eq:DwellTimeConstraint}-\eqref{eq:TotalTravelTimeConstraints},\eqref{eq:TripTimeConstraintSynModel},\eqref{eq:TurnAroundConstraintSyncModel},\eqref{eq:defOfy},\\
0\leq a_{i}^{t}\leq m,0\leq d_{i}^{t}\leq m,\quad\forall i\in\mathcal{N}^{t},\quad\forall t\in\mathcal{T},
\end{array}\label{syncConvexForm}
\end{equation}
is the best convex approximation of the nonconvex problem~\eqref{syncNonconvexForm}
from below. 
\end{prop}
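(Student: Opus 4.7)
The plan is to identify problem~\eqref{syncConvexForm} with the convex relaxation of~\eqref{syncNonconvexForm} obtained by replacing the nonconvex objective with its convex envelope. The feasible set is already convex: all of~\eqref{eq:DwellTimeConstraint}--\eqref{eq:TotalTravelTimeConstraints}, \eqref{eq:TripTimeConstraintSynModel}, \eqref{eq:TurnAroundConstraintSyncModel}, and~\eqref{eq:defOfy} are affine in $(a,d,y)$, and the bounds $0\le a_i^t,d_i^t\le m$ confine the event times to a compact box, which through~\eqref{eq:defOfy} confines each coordinate $y_k$ to an interval $[-M,M]$ (with $M$ of the order of $m$ plus the fixed offsets $\vartriangle$ and $\triangledown$). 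It therefore suffices to compute $\env\bigl(\card(\cdot)+\gamma\|\cdot\|_1\bigr)$ over the product box $[-M,M]^{|\mathcal{E}|}$.

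The second step exploits separability. The objective decomposes as $\card(y)+\gamma\|y\|_1=\sum_{k}\phi(y_k)$, where $\phi(z)=0$ at $z=0$ and $\phi(z)=1+\gamma|z|$ otherwise. Since both the sum and the box are coordinate-wise separable, the Fenchel biconjugate identity gives $\env\bigl(\card(\cdot)+\gamma\|\cdot\|_1\bigr)(y)=\sum_{k}\env\phi(y_k)$. For the scalar envelope I would use the affine-minorant characterization $\env\phi(z)=\sup\{az+b:az'+b\le\phi(z')\text{ for every }z'\in[-M,M]\}$. Evaluating this constraint at $z'=0$ forces $b\le 0$, while evaluating it at $z'=\pm M$ forces $|a|\le\gamma+(1-b)/M$. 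Maximizing $az+b$ under these restrictions---first over $a$, attained at $a=\sign(z)\bigl[\gamma+(1-b)/M\bigr]$, and then over $b\le 0$, attained at $b=0$ because $|z|\le M$ makes the coefficient $1-|z|/M$ on $b$ nonnegative---yields $\env\phi(z)=\bigl(\gamma+\tfrac{1}{M}\bigr)|z|$.

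Summing over coordinates gives $\env\bigl(\card(\cdot)+\gamma\|\cdot\|_1\bigr)(y)=\bigl(\gamma+\tfrac{1}{M}\bigr)\|y\|_1$, a strictly positive scalar multiple of the objective of~\eqref{syncConvexForm}; since positive scaling does not alter the minimizers of a minimization problem, the convex-envelope relaxation of~\eqref{syncNonconvexForm} and~\eqref{syncConvexForm} share the same optimal set, so~\eqref{syncConvexForm} is indeed the tightest convex under-approximation of~\eqref{syncNonconvexForm}. I expect the main obstacle to be the scalar envelope calculation itself, since the jump of $\phi$ at the origin precludes any smoothness-based shortcut and one must carry out the affine-minorant optimization explicitly, paying attention to the degenerate boundary case $|z|=M$ where the supremum is attained on a ray of $(a,b)$ rather than a single point. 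The separability of the envelope over the product domain is by contrast routine, following from the fact that the Fenchel biconjugate of a separable sum on a product set equals the sum of the coordinate biconjugates.
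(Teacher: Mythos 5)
Your proof is correct and follows essentially the same route as the paper: both arguments note that the two problems share a convex (affine) constraint set, reduce the claim to computing the convex envelope of the objective over the $\ell_\infty$-bounded domain of $y$ induced by the box constraints on $a$ and $d$, arrive at $(\gamma+\tfrac{1}{M})\|y\|_1$, and discard the positive constant factor. The only difference is that the paper cites the known formula $\env\card(y)=\tfrac{1}{P}\|y\|_1$ on the ball $\|y\|_\infty\leq P$ and simply adds the convex term $\gamma\|y\|_1$, whereas you compute the envelope of the combined objective $\card(y)+\gamma\|y\|_1$ from first principles via separability and affine minorants --- which in fact makes explicit a step the paper glosses over, namely that $\env\left(\card(\cdot)+\gamma\|\cdot\|_1\right)$ coincides here with $\env\card(\cdot)+\gamma\|\cdot\|_1$ (in general the envelope of a sum need not equal the sum of the envelopes, even when one summand is convex).
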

\begin{proof}
Both problems \eqref{syncConvexForm} and \eqref{syncNonconvexForm}
have the same constraint set, so we need to focus on the objective
only. The best convex approximation of a nonconvex function $f:C\rightarrow\reals$
(where $C$ is any set) from below is given by its convex envelope
$\env f$ on $C$. The function $\env f$ is the largest convex function
that is an under estimator of $f$ on $C$, \emph{i.e.,} 
\[
\env f=\textrm{sup}\{\tilde{f}:C\rightarrow\reals\mid\tilde{f}\text{ is convex and }\tilde{f}\leq f\},
\]
where $\textrm{sup}$ stands for the supremum, , the least upper bound
of the set. The definition implies, $\epi\env f=\conv\epi f$.

From Equation~\eqref{eq:defOfy} we see that $y$ is an affine transformation
of $a$ and $d$, and from the last constraints of problem~\eqref{syncNonconvexForm}
we see that both $a$ and $d$ are upper bounded by $m$, \emph{i.e.},
$\|a\|_{\infty}\leq m$ and $\|d\|_{\infty}\leq m$. So there exists
a positive number $P$ such that $\|y\|_{\infty}\leq P$. As the domain
of $y$ is bounded in an $\ell_{\infty}$ ball with radius $P$, $\env\card(y)=\frac{1}{P}\|y\|_{1}$
\citep[page 321]{Calafiore14}. So, the best convex approximation
of the objective from below is $\frac{1}{P}\|y\|_{1}+\gamma\|y\|_{1}=(\frac{1}{P}+\gamma)\|y\|_{1}$.
As the coefficient $(\frac{1}{P}+\gamma)$ is a constant for a particular
optimization problem, it can be omitted, and thus we arrive at the
claim.
\end{proof}
Using the epigraph approach \citep[pages 143-144]{Boyd2009}, we can
transform the convex problem \eqref{syncConvexForm} into a linear
program as follows. For each $(i,j,t,\overset{\rightharpoonup}{t})\in\overset{\rightharpoonup}{\mathcal{E}}$
and each $(i,j,t,\overset{\leftharpoonup}{t})\in\overset{\leftharpoonup}{\mathcal{E}}$,
we introduce new decision variables $\theta_{ij}^{t\overset{\rightharpoonup}{t}}$
and $\theta_{ij}^{t\overset{\leftharpoonup}{t}}$ respectively, such
that $\theta_{ij}^{t\overset{\rightharpoonup}{t}}\geq|d_{i}^{t}+\vartriangle_{i}^{t}-a_{j}^{\overset{\rightharpoonup}{t}}+\triangledown_{j}^{\overset{\rightharpoonup}{t}}|$
and $\theta_{ij}^{t\overset{\leftharpoonup}{t}}\geq|d_{j}^{\overset{\leftharpoonup}{t}}+\vartriangle_{j}^{\overset{\leftharpoonup}{t}}-a_{i}^{t}+\triangledown_{i}^{t}|$.
Then, the convex optimization problem can be converted into the following
linear problem: 
\begin{equation}
\begin{array}{ll}
\mbox{minimize}\quad\sum_{(i,j,t,\overset{\rightharpoonup}{t})\in\overset{\rightharpoonup}{\mathcal{E}}}{\theta_{ij}^{t\overset{\rightharpoonup}{t}}}+\sum_{(i,j,t,\overset{\leftharpoonup}{t})\in\overset{\leftharpoonup}{\mathcal{E}}}{\theta_{ij}^{t\overset{\leftharpoonup}{t}}}\\
\mbox{subject to}\\
\theta_{ij}^{t\overset{\rightharpoonup}{t}}\geq d_{i}^{t}+\vartriangle_{i}^{t}-a_{j}^{\overset{\rightharpoonup}{t}}+\triangledown_{j}^{\overset{\rightharpoonup}{t}},\qquad\forall(i,j,t,\overset{\rightharpoonup}{t})\in\overset{\rightharpoonup}{\mathcal{E}}\quad\\
\theta_{ij}^{t\overset{\rightharpoonup}{t}}\geq-d_{i}^{t}-\vartriangle_{i}^{t}+a_{j}^{\overset{\rightharpoonup}{t}}-\triangledown_{j}^{\overset{\rightharpoonup}{t}},\qquad\forall(i,j,t,\overset{\rightharpoonup}{t})\in\overset{\rightharpoonup}{\mathcal{E}}\quad\\
\theta_{ij}^{t\overset{\leftharpoonup}{t}}\geq d_{j}^{\overset{\leftharpoonup}{t}}+\vartriangle_{j}^{\overset{\leftharpoonup}{t}}-a_{i}^{t}+\triangledown_{i}^{t},\qquad\forall(i,j,t,\overset{\leftharpoonup}{t})\in\overset{\leftharpoonup}{\mathcal{E}}\\
\theta_{ij}^{t\overset{\leftharpoonup}{t}}\geq-d_{j}^{\overset{\leftharpoonup}{t}}-\vartriangle_{j}^{\overset{\leftharpoonup}{t}}+a_{i}^{t}-\triangledown_{i}^{t},\qquad(i,j,t,\overset{\leftharpoonup}{t})\in\overset{\leftharpoonup}{\mathcal{E}}\\
\text{Equations }\eqref{eq:DwellTimeConstraint}-\eqref{eq:TotalTravelTimeConstraints},\eqref{eq:TripTimeConstraintSynModel},\eqref{eq:TurnAroundConstraintSyncModel},\\
0\leq a_{i}^{t}\leq m,0\leq d_{i}^{t}\leq m\qquad\forall t\in\mathcal{T},\quad\forall i\in\mathcal{N}^{t},
\end{array}\label{finalLinearForm}
\end{equation}
where the decision variables are $a_{i}^{t},d_{i}^{t},\theta_{ij}^{t\overset{\rightharpoonup}{t}}$
and $\theta_{ij}^{t\overset{\leftharpoonup}{t}}$.

\section{Limitations of the optimization model}

\label{sec:lim} In this section, we discuss the limitations of our
model as follows. 
\begin{itemize}
\item We have assumed that the amount by which the trip time is allowed
to vary is on the order of seconds (Assumption \ref{assum:triptime}).
Though this is true for most of the subway systems, there are exceptions
where this assumption may not hold. For example, when a trip between
two cities is considered (especially involving different countries),
the trip is on the order of hours with the acceptable trip time bound
often being on the order of 5-10 minutes and even more in some cases.
In such a scenario, an affine approximation of the energy with respect
to trip time would not be very efficient any more, and our model would
not be suitable for such a case.
\item In Section \ref{Final_Opt} we have have applied two different heuristics
to arrive at a convex optimization problem. At first we have used
$\frac{1}{e}$ heuristic to come up with a tractable description of
the overlapped area between power graphs, and then we have used the
$\ell_{1}$ norm heuristic to approximate a nonconvex objective function
with its convex envelope. So, it is quite likely that the timetable
obtained by solving the convex optimization problem (Problem \ref{finalLinearForm})
would have a worse objective value compared to the original intractable
optimization problem. For this reason our model is energy-efficient,
but not necessarily energy-optimal.
\item The model does not directly address the case of significant delay.
However, we have considered two indirect ways of dealing with it in
practice. 

\begin{itemize}
\item In any automatic train supervision system, which has the responsibility
of implementing the timetable, dwell and velocity regulation are performed
to maintain trains on their proper time. If there is a deviation from
the optimal timetable because of some delay, the ATS performs regulation
to move delayed trains back to the planned optimal timetable timetable.
Thus the system will typically return to a normal state in less than
half an hour after a delay of one minute.
\item Another way is incorporating the change in the system (due to the
delay) as an input data and solving a new but shorter optimization
problem with a time horizon of 1-2 hours which can be done in real
time using our model. While the shorter model is being implemented
we solve the larger optimization problem spanning the rest of the
day. 
\end{itemize}
\end{itemize}
\begin{figure*}[h]
\includegraphics{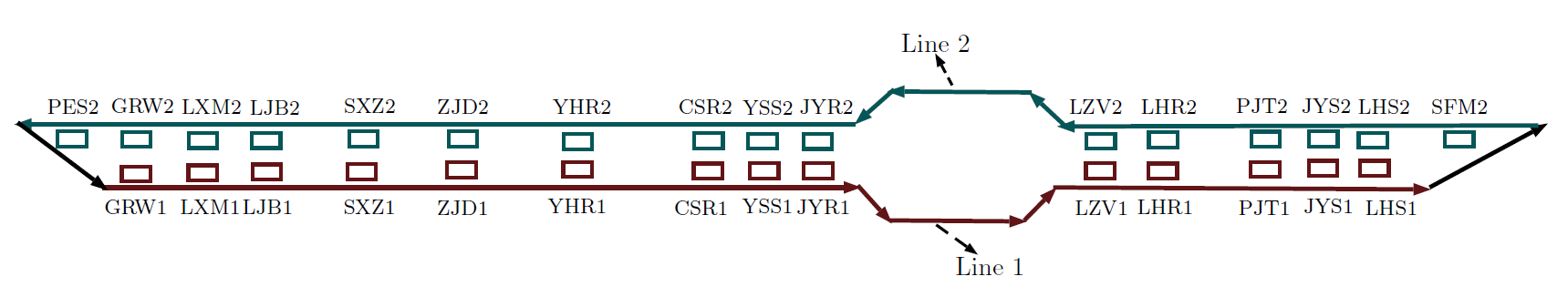} \caption{Railway network considered for numerical study }
\label{shanghaiLine} 
\end{figure*}

\section{Numerical study}

\label{numStud} In this section we apply our model to different problem
instances spanning full service period of one day to service PES2-SFM2
of line 8 of Shanghai Metro network. Shanghai Metro is the world's
largest rapid transit system by route length, second largest by number
of stations(after Beijing), and third oldest rapid transit system
in mainland China. Line 8, opened on December of 2007, is one of the
14 lines of the Shanghai Metro Network. It passes by some of Shanghai's
densest neighborhoods, and has a daily ridership of approximately
1.08 million (2014 peak). This line is 37.4 km long with 28 stations
in operation \citep{wiki}. The service PES2-SFM2 of line 8 of the
Shanghai Metro network is shown in Figure~\ref{shanghaiLine}. There
are two lines in this network: Line 1 and Line 2. There are fourteen
stations in the network denoted by all capitalized words in the figure.
Each station has two platforms each on different train lines, \emph{e.g.,},
LXM is station that has two opposite platforms: LXM1 and LXM2 on Line
1 and Line 2 respectively. The platforms are denoted by rectangles.
The platforms indicated by PES2 and SFM2 are the turn-around points
on Line 2, with the crossing-overs being PES2-GRW1 and LHS1-SFM2.
The number of trains, headway times, speed of the involved trains,
the grades of the tracks and nature of the energy profile of the associated
acceleration and braking phases are different for different instances.

Now we provide some relevant information regarding the railway network
in consideration. The line is 37 km long. The average distance between
two stations in this network is 1.4 km, with the minimum distance
being 738 m (between YHR and ZJD) and maximum distance being 2.6 km
(between PJT and LHR). The slope of the track is in $[-2.00453^{\circ},2.00453^{\circ}]$.
The maximum allowable acceleration of a train is 1.04 $\textrm{m/s}^{2}$
at accelerating phase. The maximum allowable deceleration rate of
a train during coasting phase is -0.2 $\mbox{\textrm{m/s}}^{2}$.
The maximum allowable deceleration of a train during braking phase
is -0.8 $\textrm{m/s}^{2}$. The conversion factor from electricity
to kinetic energy is 0.9, and the conversion factor from kinetic to
regenerative braking energy is 0.76. The transmission loss factor
of regenerative electricity is 0.1. The mass of the train is in $[229370,361520]$
kg, with the average mass being $295445$ kg.

The data on speed limit is described by Table~\ref{table-speed-limit}
in Appendix. The speed limit data are based on grade and curvature
of the tracks, and operational constraints present in the system.
For the railway network, the tracks have piece-wise speed limits for
trains, \emph{i.e.,} each track (except CSR2-YHR2) is divided into
multiple segments, where each segment has a constant speed limit.
The track CSR2-YHR2 has only one segment (itself) with a speed limit
of 60 km/h. Table~\ref{table-speed-limit} is provided to us by Thales
Canada Inc. 

The numerical study was executed on a Intel Core i7-46400 CPU with
8 GB RAM running Windows 8.1 Pro operating system. For modelling the
problem, we have used \texttt{JuMP} - an open source algebraic modelling
language embedded in programming language \texttt{Julia} \citep{Lubin2013}.
Within our \texttt{JuMP} code we have called academic version of Gurobi
Optimizer 6.0 as the solver. We have implemented an interior point
algorithm because of the underlying sparsity in the data structure.
As mentioned before, a measure of the quality of affine fittings using
least-squares approach is given by the \emph{coefficient of determination},
which can vary between 0 to 1, with 0 being the worst and 1 being
the best \citep[page 518]{Mendenhall2012}. In our numerical study,
the average coefficient of determination of the affine fittings for
energy versus trip times over all different trips and all trains is
found to be 0.9483 with a standard deviation of 0.05, which justifies
our approach.

The duration of the timetables is eighteen hours which is the full
service period of the railway network. We have considered eleven different
instances with varying average headway times and number of trains.
The number of trains increases as the average headway time decreases,
where the relation between them can be determined from Equation \ref{eq:headway_calculation}.
The results of the numerical study are shown in Table~\ref{Result_Table}.

We can see that, in all of the cases our model has found the optimal
timetables very quickly, the largest runtime being 12.58s. To the
best of our knowledge, this model is the only one to calculate energy-efficient
railway timetable spanning an entire day, the next largest being 6
hours only \citep{DasGuptaACC15} with a much larger computation time
for smaller sized problems. After we get the final timetable, we calculate
the total \emph{effective energy consumption} by all trains involved
in SPSTPs and compare it with the original timetables. The effective
energy consumption of a train during a trip is defined as the difference
between the total energy required to make a trip and the amount of
energy that is being supplied by a braking train during synchronization
process. So, the effective energy consumption is the energy that will
be consumed from the electrical substations.

\textcolor{black}{Now we briefly point out how the total energy consumption
is calculated theoretically. Consider the trip of any train $t$ of
mass $m_{t}$ from one platform to the next one. Assume the trip time
to make the trip is $T$. At time instance $\tau$, the acceleration,
speed, position, and resistive acceleration is denoted by $u_{t}(\tau),v_{t}(\tau)$,
$s_{t}(\tau)$ and $-r\left(v_{t}(\tau)\right)$, respectively. The
resistive acceleration is given by the }\textcolor{black}{\emph{Davis
formula }}\textcolor{black}{\citep{Davis1926} 
\[
r\left(v_{t}(\tau)\right)=a_{0}+a_{1}v_{t}(\tau)+a_{2}v_{t}^{2}(\tau),
\]
 where $a_{0},a_{1},a_{2}$ are known positive numbers. At time instance
$\tau$, the net acceleration of the train is equal to $u_{t}(\tau)-r\left(v_{t}(\tau)\right)$,
where $u_{t}(\tau)$ bounded by magnitude constraints which are non-increasing
and depends on the speed of the train. Only the positive acceleration
force consumes energy and negative acceleration during braking produces
regenerative energy. Define, $u_{t}^{+}(\tau)=\max\{0,u_{t}(\tau)\}$,
and $u_{t}^{-}(\tau)=\max\{0,-u(\tau)\}$ which represents the positive
and negative part of $u_{t}(\tau)$, respectively. The negative acceleration
associated with regenerative braking is denoted by $u_{t,\text{regen}}^{-}(\tau)$.
So, the total energy consumed by the train while making this trip
is given by the integral: $\int_{0}^{T}m_{t}u_{t}^{+}(\tau)s_{t}(\tau)d\tau$
and the total regenerative energy produced by the train is given by
$\int_{0}^{T}m_{t}u_{t,\text{regen}}^{-}(\tau)s_{t}(\tau)d\tau.$}

\textcolor{black}{Consider a suitable train pair, say trains $t$
and $\tilde{t}$ associated with the SPSTP $(i,j,t,\tilde{t})$, with
train $t$ accelerating and train $\tilde{t}$ braking. After we have
calculated the timetable, it will provide us with the duration time
during which they are synchronized. Let us denote the times for the
beginning and end of the synchronization process by $T_{1}$ and $T_{2}$.
For any $x\in\mathbf{R}$, denote $x^{+}=\max\{x,0\}$. Then the effective
energy consumption of associated with this train pair is denoted by,
\[
\int_{T_{1}}^{T_{2}}\left(m_{t}u_{t}^{+}(\tau)s_{t}(\tau)-m_{\tilde{t}}u_{\tilde{t},\text{regen}}^{-}(\tau)s_{\tilde{t}}(\tau)\right)^{+}d\tau,
\]
The physical interpretation for the above integral is as follows.
If for some reason we have more regenerative energy can be provided
than needed by accelerating train (though the likelihood of the occurrence
of this case is very low in practice), then the extra energy is burned
via resistive braking. Similarly we can define the effective energy
consumption with train $\tilde{t}$ accelerating and train t braking
as follows: 
\[
\int_{T_{1}}^{T_{2}}\left(m_{\tilde{t}}u_{\tilde{t}}^{+}(\tau)s_{\tilde{t}}(\tau)-m_{t}u_{t,\text{regen}}^{-}(\tau)s_{t}(\tau)\right)^{+}d\tau.
\]
 So, the effective energy consumption effective energy consumption
associated with the SPSTP $(i,j,t,\tilde{t})$ is denoted by: 
\[
E_{\text{effective}}^{(i,j,t,\tilde{t})}=\begin{cases}
\int_{T_{1}}^{T_{2}}\left(m_{t}u_{t}^{+}(\tau)s_{t}(\tau)-m_{\tilde{t}}u_{\tilde{t},\text{regen}}^{-}(\tau)s_{\tilde{t}}(\tau)\right)^{+}d\tau, & \;\text{if train \ensuremath{t} is accelerating and train \ensuremath{\tilde{t}} is braking}\\
\int_{T_{1}}^{T_{2}}\left(m_{\tilde{t}}u_{\tilde{t}}^{+}(\tau)s_{\tilde{t}}(\tau)-m_{t}u_{t,\text{regen}}^{-}(\tau)s_{t}(\tau)\right)^{+}d\tau, & \;\text{if train \ensuremath{t} is braking and train \ensuremath{\tilde{t}} is accelerating}
\end{cases}
\]
 In practice, these integrations are performed numerically for which
robust and fast packages exist }\textcolor{black}{\emph{e.g.,}}\textcolor{black}{{}
\citep{cubature} in our case. The total effective energy consumption
over all SPSTPs is given by: 
\[
E_{\text{effective}}=\sum_{(i,j,t,\tilde{t})\in\mathcal{E}}E_{\text{effective}}^{(i,j,t,\tilde{t})}.
\]
}

The original timetables, which we compare the final timetables with,
are provided by Thales Canada Inc. It should be noted that, the number
of trains $\mathcal{T}$ is fixed for each of the instances. The energy
calculation is done using \texttt{SPSIM}, which is a proprietary software
owned by Thales Canada Inc \citep{selTrac}, and \texttt{Cubature},
which is an open-source \texttt{Julia} package written by Steven G.
Johnson that uses an adaptive algorithm for the approximate calculation
of multiple integrals \citep{cubature}. \texttt{SPSIM} calculates
the power versus time graphs of all the active trains for the original
and optimal timetables. \texttt{Cubature} is used to calculate the
effective area under the power versus time graphs to determine 1)
the total energy required by the trains during the trips, 2) the total
transferred regenerative energy during the SPSTPs, and 3) the effective
energy consumption as the difference of the first two quantities.
The effective energy consumption of the optimal timetables in comparison
with the original ones is reduced quite significantly - even in the
worst case, the reduction in effective energy consumption is 19.27\%,
with the best case corresponding to 21.61\%.

\begin{table*}[tp]
\centering{} {\scriptsize{}\caption{Results of the numerical study performed to line 8 of Shanghai Metro
network}
\label{Result_Table} }%
\begin{tabular}{|p{1cm}|p{1cm}|p{1cm}|p{1cm}|p{1cm}|p{1cm}|p{1cm}|p{2cm}|p{2cm}|p{2cm}|}
\hline 
{\scriptsize{}Number of trains } & {\scriptsize{}Number of constraints step 1 } & {\scriptsize{}Number of variables step 1 } & {\scriptsize{}step 1 CPU time (s) } & {\scriptsize{}Number of constraints step 2 } & {\scriptsize{}Number of Variable step 2 } & {\scriptsize{}step 2 CPU Time (s) } & {\scriptsize{}Initial effective energy consumption (kWh) } & {\scriptsize{}Final effective energy consumption (kWh) } & {\scriptsize{}Reduction in effective energy consumption }\tabularnewline
\hline 
\hline 
{\scriptsize{}1000 } & {\scriptsize{}91998 } & {\scriptsize{}30060 } & {\scriptsize{}3.24 } & {\scriptsize{}116558 } & {\scriptsize{}34871 } & {\scriptsize{}6.03 } & {\scriptsize{}250951.3 } & {\scriptsize{}201658.7 } & {\scriptsize{}19.64 \% }\tabularnewline
\hline 
{\scriptsize{}1032 } & {\scriptsize{}94944 } & {\scriptsize{}31022 } & {\scriptsize{}3.03 } & {\scriptsize{}120394 } & {\scriptsize{}36038 } & {\scriptsize{}5.45 } & {\scriptsize{}261994.5 } & {\scriptsize{}208558.1 } & {\scriptsize{}20.40 \% }\tabularnewline
\hline 
{\scriptsize{}1066 } & {\scriptsize{}98074 } & {\scriptsize{}32044 } & {\scriptsize{}3.96 } & {\scriptsize{}124494 } & {\scriptsize{}37290 } & {\scriptsize{}5.62 } & {\scriptsize{}272486.7 } & {\scriptsize{}215896.7 } & {\scriptsize{}20.77 \% }\tabularnewline
\hline 
{\scriptsize{}1100 } & {\scriptsize{}101204 } & {\scriptsize{}33066 } & {\scriptsize{}3.47 } & {\scriptsize{}129284 } & {\scriptsize{}38887} & {\scriptsize{}5.39 } & {\scriptsize{}288677.5 } & {\scriptsize{}229091.8 } & {\scriptsize{}20.64 \% }\tabularnewline
\hline 
{\scriptsize{}1132 } & {\scriptsize{}104150 } & {\scriptsize{}34028 } & {\scriptsize{}3.15 } & {\scriptsize{}133354 } & {\scriptsize{}40171 } & {\scriptsize{}6.69 } & {\scriptsize{}308924.4 } & {\scriptsize{}243672 } & {\scriptsize{}21.12 \% }\tabularnewline
\hline 
{\scriptsize{}1166 } & {\scriptsize{}107280 } & {\scriptsize{}35050 } & {\scriptsize{}2.84 } & {\scriptsize{}137322 } & {\scriptsize{}41357 } & {\scriptsize{}6.67 } & {\scriptsize{}322612.7 } & {\scriptsize{}256288.7 } & {\scriptsize{}20.56 \% }\tabularnewline
\hline 
{\scriptsize{}1198 } & {\scriptsize{}110226 } & {\scriptsize{}36012 } & {\scriptsize{}2.96 } & {\scriptsize{}141322 } & {\scriptsize{}42606 } & {\scriptsize{}7.16 } & {\scriptsize{}329388.2 } & {\scriptsize{}262205.6 } & {\scriptsize{}20.40\% }\tabularnewline
\hline 
{\scriptsize{}1232 } & {\scriptsize{}113356 } & {\scriptsize{}37034 } & {\scriptsize{}4.04 } & {\scriptsize{}145756 } & {\scriptsize{}44025 } & {\scriptsize{}7.61 } & {\scriptsize{}354050.2 } & {\scriptsize{}277536.7 } & {\scriptsize{}21.61\% }\tabularnewline
\hline 
{\scriptsize{}1266 } & {\scriptsize{}116486 } & {\scriptsize{}38056 } & {\scriptsize{}3.84 } & {\scriptsize{}149868 } & {\scriptsize{}45283 } & {\scriptsize{}8.74 } & {\scriptsize{}368901.4 } & {\scriptsize{}297815 } & {\scriptsize{}19.27 \% }\tabularnewline
\hline 
{\scriptsize{}1298 } & {\scriptsize{}119432 } & {\scriptsize{}39018 } & {\scriptsize{}3.93 } & {\scriptsize{}153480 } & {\scriptsize{}46338 } & {\scriptsize{}7.62 } & {\scriptsize{}366488.4 } & {\scriptsize{}293068.8 } & {\scriptsize{}20.03 \% }\tabularnewline
\hline 
{\scriptsize{}1332 } & {\scriptsize{}122562 } & {\scriptsize{}40040 } & {\scriptsize{}4.22 } & {\scriptsize{}157752 } & {\scriptsize{}47676 } & {\scriptsize{}8.02 } & {\scriptsize{}379700.8 } & {\scriptsize{}300910.1 } & {\scriptsize{}20.75 \% }\tabularnewline
\hline 
\end{tabular}
\end{table*}

\section{Conclusion}

\label{Conclusion} In this paper we have proposed a novel two-step
linear optimization model to calculate an energy-efficient timetable
in modern metro railway networks. The objective is to minimize the
total electrical energy consumption of all trains and to maximize
the utilization of regenerative energy produced by braking trains.
In contrast to other existing models, this model is computationally
the most tractable one. We have applied our optimization model to
eleven different instances of service PES2-SFM2 of line 8 of Shanghai
Metro network. All instances span the full service period of one day
(18 hours) with thousands of active trains. For all instances our
model has found optimal timetables in less than 13s with significant
reductions in the effective energy consumption. Code based on our
optimization model has been integrated with the industrial timetable
compiler of Thales Inc.

\section*{Acknowledgments}

This work was supported by NSERC-CRD and Thales, Inc (CRDPJ 461180
-13). The authors acknowledge helpful discussions with Professor J.
Christopher Beck, Department of Mechanical \& Industrial Engineering,
University of Toronto.

\section*{Appendix }

\begin{longtable}{|c|c|c|c|}
\caption{Speed limit for{\footnotesize{} }line 8 of Shanghai Metro network{\footnotesize{}\label{table-speed-limit}}}
\tabularnewline
\hline 
\textbf{Origin-Destination}  & \textbf{Start (m)}  & \textbf{End (m)}  & \textbf{Speed limit (km/h)} \tabularnewline
\endfirsthead
\hline 
CSR1-YSS1  & 0.0  & 143.5  & 60 \tabularnewline
\hline 
 & 143.5  & 1004.6  & 70 \tabularnewline
\hline 
 & 1004.6  & 1138.2  & 60 \tabularnewline
\hline 
\hline 
CSR2-YHR2  & 0.0  & 910.0  & 60 \tabularnewline
\hline 
\hline 
GRW1-LXM1  & 0.0  & 153.3  & 60 \tabularnewline
\hline 
 & 153.3  & 870.1  & 70 \tabularnewline
\hline 
 & 870.1  & 1006.9  & 60 \tabularnewline
\hline 
\hline 
GRW2-PES2  & 0.0  & 173.1  & 60 \tabularnewline
\hline 
 & 173.1  & 636.4  & 70 \tabularnewline
\hline 
 & 636.4  & 769.5  & 60 \tabularnewline
\hline 
\hline 
JYR1-LZV1  & 0.0  & 1366.7  & 60 \tabularnewline
\hline 
 & 1366.7  & 2220.6  & 65 \tabularnewline
\hline 
 & 2220.6  & 2357.3  & 60 \tabularnewline
\hline 
\hline 
JYR2-YSS2  & 0.0  & 143.4  & 60 \tabularnewline
\hline 
 & 143.4  & 1388.9  & 70 \tabularnewline
\hline 
 & 1388.9  & 1522.3  & 60 \tabularnewline
\hline 
\hline 
JYS1-LHS1  & 0.0  & 140.0  & 60 \tabularnewline
\hline 
 & 140.0  & 829.2  & 75 \tabularnewline
\hline 
 & 829.2  & 1202.3  & 60\tabularnewline
\hline 
\hline 
JYS2-PJT2  & 0.0  & 140.0  & 60\tabularnewline
\hline 
 & 140.0  & 371.1  & 70 \tabularnewline
\hline 
 & 371.1  & 1081.9  & 75 \tabularnewline
\hline 
 & 1081.9  & 1249.8  & 70 \tabularnewline
\hline 
 & 1249.8  & 1386.2  & 60 \tabularnewline
\hline 
\hline 
LHR1-PJT1  & 0.0  & 140.1  & 60 \tabularnewline
\hline 
 & 140.1  & 766.2  & 70 \tabularnewline
\hline 
 & 766.2  & 1623.4  & 75 \tabularnewline
\hline 
 & 1623.4  & 1805.9  & 70 \tabularnewline
\hline 
 & 1805.9  & 2374.4  & 75 \tabularnewline
\hline 
 & 2374.4  & 2487.8  & 70 \tabularnewline
\hline 
 & 2487.8  & 2624.3  & 60 \tabularnewline
\hline 
\hline 
LHR2-LZV2  & 0.0  & 139.8  & 60 \tabularnewline
\hline 
 & 139.8  & 2457.3  & 70 \tabularnewline
\hline 
 & 2457.3  & 2594.1  & 60 \tabularnewline
\hline 
\hline 
LHS1-SFM1  & 0.0  & 140.0  & 60 \tabularnewline
\hline 
 & 140.0  & 1220.1  & 70 \tabularnewline
\hline 
 & 1220.1  & 1357.4  & 60 \tabularnewline
\hline 
\hline 
LHS2-JYS2  & 0.0  & 186.7  & 60 \tabularnewline
\hline 
 & 186.7  & 853.8  & 75 \tabularnewline
\hline 
 & 853.8  & 1064.4  & 70\tabularnewline
\hline 
 & 1064.4  & 1200.8  & 60 \tabularnewline
\hline 
\hline 
LJB1-SXZ1  & 0.0  & 140.1  & 60\tabularnewline
\hline 
 & 140.1  & 1027.4  & 70 \tabularnewline
\hline 
 & 1027.4  & 1167.2  & 60 \tabularnewline
\hline 
\hline 
LJB2-LXM2  & 0.0  & 140.1  & 60\tabularnewline
\hline 
 & 140.1  & 693.3  & 70 \tabularnewline
\hline 
 & 693.3  & 830.2  & 60 \tabularnewline
\hline 
\hline 
LXM1-LJB1  & 0.0  & 140.0  & 60 \tabularnewline
\hline 
 & 140.0  & 689.3  & 70 \tabularnewline
\hline 
 & 689.3  & 826.1  & 60 \tabularnewline
\hline 
\hline 
LXM2-GRW2  & 0.0  & 140.0  & 60 \tabularnewline
\hline 
 & 140.0  & 855.5  & 70 \tabularnewline
\hline 
 & 855.5  & 1005.2  & 60 \tabularnewline
\hline 
\hline 
LZV1-LHR1  & 0.0  & 140.1  & 60 \tabularnewline
\hline 
 & 140.1  & 1901.1  & 70 \tabularnewline
\hline 
 & 1901.1  & 2199.1  & 75 \tabularnewline
\hline 
 & 2199.1  & 2456.3  & 70 \tabularnewline
\hline 
 & 2456.3  & 2592.8  & 60 \tabularnewline
\hline 
\hline 
LZV2-JYR2  & 0.0  & 143.3  & 60 \tabularnewline
\hline 
 & 143.3  & 1007.8  & 65 \tabularnewline
\hline 
 & 1007.8  & 2338.1  & 60 \tabularnewline
\hline 
\hline 
PES1-GRW1  & 0.0  & 140.0  & 60 \tabularnewline
\hline 
 & 140.0  & 561.7  & 70 \tabularnewline
\hline 
 & 561.7  & 761.5  & 60\tabularnewline
\hline 
\hline 
PJT1-JYS1  & 0.0  & 143.3  & 60 \tabularnewline
\hline 
 & 143.3  & 374.6  & 70 \tabularnewline
\hline 
 & 374.6  & 1089.8  & 75 \tabularnewline
\hline 
 & 1089.8  & 1250.2  & 70 \tabularnewline
\hline 
 & 1250.2  & 1386.7  & 60 \tabularnewline
\hline 
\hline 
PJT2-LHR2  & 0.0  & 143.4  & 60 \tabularnewline
\hline 
 & 143.4  & 355.8  & 70 \tabularnewline
\hline 
 & 355.8  & 829.3  & 75 \tabularnewline
\hline 
 & 829.3  & 1039.2  & 70 \tabularnewline
\hline 
 & 1039.2  & 1858.3  & 75 \tabularnewline
\hline 
 & 1858.3  & 2488.9  & 70 \tabularnewline
\hline 
 & 2488.9  & 2622.1  & 60 \tabularnewline
\hline 
\hline 
SFM2-LHS2  & 0.0  & 140.3  & 60 \tabularnewline
\hline 
 & 140.3  & 373.0  & 70 \tabularnewline
\hline 
 & 373.0  & 742.3  & 75 \tabularnewline
\hline 
 & 742.3  & 1225.1  & 70 \tabularnewline
\hline 
 & 1225.1  & 1358.3  & 60 \tabularnewline
\hline 
\hline 
SXZ1-ZJD1  & 0.0  & 140.1  & 60 \tabularnewline
\hline 
 & 140.1  & 647.2  & 65 \tabularnewline
\hline 
 & 647.2  & 1699.3  & 70 \tabularnewline
\hline 
 & 1699.3  & 2039.3  & 60 \tabularnewline
\hline 
\hline 
SXZ2-LJB2  & 0.0  & 160.1  & 60 \tabularnewline
\hline 
 & 160.1  & 1027.5  & 70 \tabularnewline
\hline 
 & 1027.5  & 1164.2  & 60 \tabularnewline
\hline 
\hline 
YHR1-CSR1  & 0.0  & 143.4  & 60\tabularnewline
\hline 
 & 143.4  & 773.7  & 70 \tabularnewline
\hline 
 & 773.7  & 910.3  & 60 \tabularnewline
\hline 
\hline 
YHR2-ZJD2  & 0.0  & 140.0  & 60 \tabularnewline
\hline 
 & 140.0  & 601.3  & 70 \tabularnewline
\hline 
 & 601.3  & 738.0  & 60 \tabularnewline
\hline 
\hline 
YSS1-JYR1  & 0.0  & 140.2  & 60 \tabularnewline
\hline 
 & 140.2  & 664.7  & 70 \tabularnewline
\hline 
 & 664.7  & 987.7  & 75 \tabularnewline
\hline 
 & 987.7  & 1389.2  & 70 \tabularnewline
\hline 
 & 1389.2  & 1525.9  & 60 \tabularnewline
\hline 
\hline 
YSS2-CSR2  & 0.0  & 430.4  & 60 \tabularnewline
\hline 
 & 430.4  & 1014.8  & 70 \tabularnewline
\hline 
 & 1014.8  & 1151.6  & 54 \tabularnewline
\hline 
\hline 
ZJD1-YHR1  & 0.0  & 140.1  & 60 \tabularnewline
\hline 
 & 140.1  & 605.9  & 70 \tabularnewline
\hline 
 & 605.9  & 742.5  & 60 \tabularnewline
\hline 
\hline 
ZJD2-SXZ2  & 0.0  & 353.8  & 60 \tabularnewline
\hline 
 & 353.8  & 1393.7  & 70 \tabularnewline
\hline 
 & 1393.7  & 1910.0  & 65 \tabularnewline
\hline 
 & 1910.0  & 2043.3  & 60 \tabularnewline
\hline 
\end{longtable}

\bibliographystyle{IEEEtran}

\end{document}